\documentclass[10pt,reqno]{amsart}
\usepackage{indentfirst, amssymb, amsmath, amsthm, mathrsfs, setspace, indentfirst, enumerate,  mathrsfs, amsmath, amsthm, MnSymbol}
\usepackage{array}
\usepackage{booktabs}
\usepackage{multirow}
\PassOptionsToPackage{table}{xcolor}
\usepackage{tikz}
\usepackage{color}
\usepackage[table]{xcolor}
\usetikzlibrary{arrows.meta, positioning}
\newtheorem*{theo1A}{Theorem 1.A}
\newtheorem*{theo1B}{Theorem 1.B}
\newtheorem*{theo1C}{Theorem 1.C}
\newtheorem*{theo1D}{Theorem 1.D}

\newtheorem*{defi1A}{Definition 1.A}
\newtheorem*{defi1B}{Definition 1.B}

\newtheorem*{lem1A}{Lemma 1.A}
\newtheorem*{theo2A}{Theorem 2.A}
\newtheorem*{theo2B}{Theorem 2.B}
\newtheorem*{theo2C}{Theorem 2.C}
\newtheorem*{theo2D}{Theorem 2.D}
\newtheorem*{theo2E}{Theorem 2.E}
\newtheorem*{theo2F}{Theorem 2.F}
\newtheorem*{theo2G}{Theorem 2.G}
\newtheorem*{theo2H}{Theorem 2.H}
\newtheorem*{theo2I}{Theorem 2.I}
\newtheorem*{theo2J}{Theorem 2.J}
\newtheorem*{theo2K}{Theorem 2.K}

\newtheorem{theo}{Theorem}[section]
\newtheorem{lem}{Lemma}[section]

\newtheorem{rem}{Remark}[section]

\newtheorem{defi}{Definition}

\newcommand{\ol}{\overline}
\newcommand{\be}{\begin{equation}}
\newcommand{\ee}{\end{equation}}
\newcommand{\beas}{\begin{eqnarray*}}
\newcommand{\eeas}{\end{eqnarray*}}
\newcommand{\bea}{\begin{eqnarray}}
\newcommand{\eea}{\end{eqnarray}}
\renewcommand{\epsilon}{\varepsilon}
\numberwithin{equation}{section}

\begin{document}
\title[On the Structure and Classification of Solutions ]
{On the Structure and Classification of Solutions to Certain Nonlinear Differential Equations}
\author[A. Banerjee, S. Majumder, S. Panja and J. F. Xu]{ A\MakeLowercase {bhijit} B\MakeLowercase {anerjee}, S\MakeLowercase {ujoy} M\MakeLowercase {ajumder}, S\MakeLowercase {hantanu} P\MakeLowercase {anja} \MakeLowercase {and} J\MakeLowercase {unfeng} X\MakeLowercase {u}}
\date{}
\address{ Department of Mathematics, University of Kalyani, West Bengal 741235, India.}
\email{abanerjee\_kal@yahoo.co.in}
\address{Department of Mathematics, Raiganj University, Raiganj, West Bengal-733134, India.}
\email{sm05math@gmail.com}
\address{ Department of Mathematics, University of Kalyani, West Bengal 741235, India.}
\email{panjasantu07@gmail.com}
\address{Department of Mathematics, Wuyi University, Jiangmen 529020, Guangdong, People's Republic of China.}
\email{xujunf@gmail.com}

\renewcommand{\thefootnote}{}
\footnote{2010 \emph{Mathematics Subject Classification}: Primary 30D35, Secondary 30D30.}
\footnote{\emph{Key words and phrases}: Meromorphic function, non-linear differential polynomials, normal families.}
\footnote{*\emph{Corresponding Author}: Junfeng Xu.}
\renewcommand{\thefootnote}{\arabic{footnote}}
\setcounter{footnote}{0}

\begin{abstract}
	This paper is devoted to the study of meromorphic solutions of nonlinear differential equations, specifically the equation
	\[
	(f^n)^{(k)}(g^n)^{(k)} = \alpha^2,
	\]
	where $k$ and $n$ are positive integers with $n>2k$, and $\alpha$ is a common small function of $f$ and $g$. Our main results provide a detailed characterization of the solutions, improving upon earlier works by Fang-Qiu \cite{4}, Fang \cite{3}, Zhang-Xu \cite{14}, and Li-Yi \cite{7}. Notably, we identify and correct significant errors in the proof of Lemma 2.11 \cite{9a}, which represents the most recent contribution in this area and provide a resolved and rigorous treatment of the problem. Equations of this type arise naturally in various areas of mathematics and applied sciences such as in the study of complex dynamical systems, integrable systems and value distribution theory in complex analysis. Moreover, understanding the meromorphic solutions helps to realize  the growth behavior of solutions, stability analysis and modeling of phenomena in physics and engineering. By characterizing these solutions, one can develop methods to solve broader classes of nonlinear differential equations and explore their qualitative properties, which are essential for both theoretical studies and practical applications.
\end{abstract}

\thanks{Typeset by \AmS -\LaTeX}
\maketitle

\section{Preliminaries and Fundamental Results}

Throughout this paper, the term \emph{meromorphic function} refers exclusively to meromorphic functions defined on the complex plane $\mathbb{C}$. We assume that the reader is familiar with the standard notation and foundational results of Nevanlinna theory; see, for instance, \cite{5,12}.

For a meromorphic function $f$, we denote by $\rho(f)$, $\mu(f)$, and $\rho_{2}(f)$ its order, lower order, and hyper-order, respectively. If $f$ is non-constant and satisfies $\rho(f)=\mu(f)$, then $f$ is said to be of \emph{regular growth}. In particular, if $f=e^{g}$, where $g$ is a non-constant entire function, then $\rho(f)=\mu(f)$; see Theorem~1.44 of \cite{12}.

Let $f$ be a non-constant meromorphic function. If $\rho(f)<+\infty$, then $S(r,f)$ denotes any quantity satisfying
\[
S(r,f)=O(\log r), \quad r\to\infty.
\]
If $\rho(f)=+\infty$, then $S(r,f)$ denotes any quantity satisfying
\[
S(r,f)=O(\log(rT(r,f))), \quad r\to\infty,\ r\notin E,
\]
where $E$ is a set of finite linear measure, not necessarily the same at each occurrence.

\subsection{Logarithmic Derivative Estimates}

A fundamental tool in our analysis is the following classical estimate for logarithmic derivatives.

\begin{theo1A}\label{l1}
	\cite{6}
	Let $f$ be a non-constant meromorphic function and let $k\geq 1$ be an integer.
	\begin{itemize}
		\item If $\rho(f)=+\infty$, then
		\[
		m\!\left(r,\frac{f^{(k)}}{f}\right)=O(\log(rT(r,f)))
		\]
		outside a possible exceptional set $E$ of finite linear measure.
		\item If $f$ is of finite order, then
		\[
		m\!\left(r,\frac{f^{(k)}}{f}\right)=O(\log r).
		\]
	\end{itemize}
\end{theo1A}

\subsection{Small Functions and Growth Comparison}

A meromorphic function $a$ is called a \emph{small function} of $f$ if
\[
T(r,a)=S(r,f).
\]
We denote by $\mathbb{S}(f)$ the class of all small functions of $f$, and set
\[
\hat{\mathbb{S}}(f)=\mathbb{S}(f)\cup\{\infty\}.
\]
Clearly, $\mathbb{C}\subset \mathbb{S}(f)$.

The following theorem provides a sufficient growth condition ensuring smallness.

\begin{theo1B}\label{l3}
	\cite[Theorem~1.18]{12}
	Let $f$ and $g$ be two non-constant meromorphic functions such that $\rho(f)<\mu(g)$. Then
	\[
	T(r,f)=o(T(r,g)) \quad \text{as } r\to\infty,
	\]
	and hence $f$ is a small function of $g$.
\end{theo1B}

\subsection{Exponent of Convergence and Exceptional Values}

Let $f$ be a non-constant meromorphic function and let $a\in\hat{\mathbb{C}}$. The \emph{exponent of convergence} of the $a$-points of $f$ is defined by
\[
\rho_{1}(a;f)=\limsup_{r\to\infty}\frac{\log^{+}N(r,a;f)}{\log r},
\]
while the exponent of convergence of distinct $a$-points is defined by
\[
\overline{\rho}_{1}(a;f)=\limsup_{r\to\infty}\frac{\log^{+}\overline{N}(r,a;f)}{\log r}.
\]

In particular:
\begin{itemize}
	\item If $a=0$, then $\rho_{1}(f)$ and $\overline{\rho}_{1}(f)$ denote the exponents of convergence of zeros and distinct zeros of $f$.
	\item If $a=\infty$, then $\rho_{1}\!\left(\frac{1}{f}\right)$ and $\overline{\rho}_{1}\!\left(\frac{1}{f}\right)$ denote the corresponding quantities for poles.
\end{itemize}

Let $f$ be a non-constant meromorphic function. If $\rho(f)>0$ and $\rho_{1}(a;f)<\rho(f)$, or if $\rho(f)=0$ and $N(r,a;f)=O(\log r)$ as $r\to\infty$, then $a\in\hat{\mathbb{C}}$ is called a \emph{Borel exceptional value} of $f$. When $\rho(f)>0$, every Picard exceptional value is also a Borel exceptional value.

\begin{theo1C}\label{l8sm2}
	\cite[Theorem~2.11]{12}
	Let $f$ be a transcendental meromorphic function with $\rho(f)>0$. If $f$ admits two distinct Borel exceptional values in $\hat{\mathbb{C}}$, then $\mu(f)=\rho(f)$, and $\rho(f)$ is either a positive integer or infinity.
\end{theo1C}

\subsection{Normal Families and Spherical Convergence} Let $D\subset\mathbb{C}$ be a domain. Consider the mapping \[ f:(D,|\cdot|_{\mathbb{R}^{2}})\longrightarrow (\hat{\mathbb{C}},\chi), \] where $\chi$ denotes the chordal metric on $\hat{\mathbb{C}}$, defined by \[ \chi(z,z')= \begin{cases} \dfrac{|z-z'|}{\sqrt{1+|z|^{2}}\sqrt{1+|z'|^{2}}}, & z,z'\in\mathbb{C},\\[1.2ex] \dfrac{1}{\sqrt{1+|z|^{2}}}, & z'=\infty. \end{cases} \] \begin{defi1A} \cite[Definition~1.3.1]{9} A sequence $\{f_{n}\}$ of functions in a domain $D$ is said to converge \emph{spherically uniformly on compact subsets} of $D$ to a function $f$ if, for every compact set $K\subset D$ and every $\varepsilon>0$, there exists $N_{0}=N_{0}(K,\varepsilon)$ such that \[ \chi(f_{n}(z),f(z))<\varepsilon \quad \text{for all } z\in K,\ n\geq N_{0}. \] \end{defi1A} \begin{defi1B} \cite[Definition~3.1.1]{9} A family $\mathcal{F}$ of meromorphic functions in $D$ is said to be \emph{normal} in $D$ if every sequence $\{f_{n}\}\subset\mathcal{F}$ contains a subsequence converging spherically uniformly on compact subsets of $D$ to a meromorphic function or identically to $\infty$. \end{defi1B} A family $\mathcal{F}$ is normal in $D$ if and only if it is normal at every point of $D$ (see \cite[Theorem~3.3.2]{9}). The spherical derivative of a meromorphic function $f$ is given by \[ f^{\#}(z)=\frac{|f'(z)|}{1+|f(z)|^{2}}. \] \begin{theo1D} \cite[Marty's Theorem]{9} A family $\mathcal{F}$ of meromorphic functions on a domain $D$ is normal in $D$ if and only if for every compact subset $K\subset D$, there exists a constant $C=C(K)$ such that \[ f^{\#}(z)\leq C \quad \text{for all } z\in K,\ f\in\mathcal{F}. \] \end{theo1D} \subsection{Zalcman’s Lemma} \begin{lem1A}\label{l6} \cite[Lemma]{17} Let $\mathcal{F}$ be a family of meromorphic functions on the unit disk $\Delta$ such that every $f\in\mathcal{F}$ has zeros of multiplicity at least $l$ and poles of multiplicity at least $m$. Let $\alpha$ satisfy $-l<\alpha<m$. Then $\mathcal{F}$ is not normal at $z_{0}\in\Delta$ if and only if there exist sequences $\{z_{n}\}\subset\Delta$, $\{f_{n}\}\subset\mathcal{F}$, and $\{\rho_{n}\}\subset\mathbb{R}^{+}$ with \[ z_{n}\to z_{0}, \qquad \rho_{n}\to 0, \] such that \[ \rho_{n}^{\alpha}f_{n}(z_{n}+\rho_{n}\zeta)\longrightarrow g(\zeta) \] spherically uniformly on compact subsets of $\mathbb{C}$, where $g$ is a non-constant meromorphic function satisfying \[ g^{\#}(\zeta)\leq g^{\#}(0)=1. \] \end{lem1A} Moreover, Zalcman showed that \begin{equation}\label{1.1} \rho_{n}=\frac{1}{f_{n}^{\#}(z_{n})}. \end{equation}

By Hurwitz’s theorem, the zeros and poles of $g$ inherit multiplicities at least $l$ and $m$, respectively.

\subsection{Value Sharing and Counting Functions}

Let $E(a;f)$ denote the set of all $a$-points of $f$, counted with multiplicities, and let $\overline{E}(a;f)$ denote the set of distinct $a$-points. For two non-constant meromorphic functions $f$ and $g$, we write
\[
f=a \Rightarrow g=a
\]
if $\overline{E}(a;f)\subset \overline{E}(a;g)$, and
\[
f=a \mapsto g=a
\]
if the multiplicity of each $a$-point of $g$ is at least that of $f$.

As usual, CM and IM denote ``counting multiplicities'' and ``ignoring multiplicities'', respectively.

Let $R=P/Q\not\equiv0$ be a rational function with coprime polynomials $P$ and $Q$. We define
\[
\deg(R)=\deg(P)-\deg(Q),
\]
and set $\deg(0)=-\infty$. If $R_{1}$ and $R_{2}$ are non-zero rational functions, then
\[
\deg(R_{1}R_{2})=\deg(R_{1})+\deg(R_{2}), \qquad
\deg\!\left(\frac{R_{1}}{R_{2}}\right)=\deg(R_{1})-\deg(R_{2}).
\]

\begin{defi}\cite{5a}
	Let $k$ be a positive integer or $\infty$. The truncated counting function $N_{k}(r,a;f)$ counts an $a$-point of multiplicity $m$ exactly $m$ times if $m\leq k$, and $k$ times if $m>k$. Thus,
	\[
	N_{k}(r,a;f)=\overline{N}(r,a;f)+\overline{N}(r,a;f\mid\geq 2)+\cdots+\overline{N}(r,a;f\mid\geq k),
	\]
	with $N_{1}(r,a;f)=\overline{N}(r,a;f)$.
\end{defi}

\begin{defi}
	Let $f$ and $g$ be non-constant meromorphic functions and $a,b\in\hat{\mathbb{C}}$. We denote by $N(r,a;f\mid g\neq b)$ the counting function of those $a$-points of $f$, counted with multiplicity, which are not $b$-points of $g$.
\end{defi}

\begin{defi}
	Let $f,g,h$ be non-constant meromorphic functions and let $a,b,c\in\hat{\mathbb{C}}$. Suppose $z_{0}$ is an $a$-point of $f$ of multiplicity $p$ such that $h(z_{0})\neq c$, and that $z_{0}$ is a $b$-point of $g$ with multiplicity $p$. We denote by
	\[
	N_{E}(r,a;f\mid g=b\mid h\neq c)
	\]
	the corresponding counting function, and by $\overline{N}_{E}(r,a;f\mid g=b\mid h\neq c)$ its reduced form.
\end{defi}

\section{\bf Solutions of the Nonlinear Differential Equation
	$\boldsymbol{(f^n)^{(k)}(g^n)^{(k)}=\alpha^2}$}

\subsection{Motivation and Analytical Framework}

Solving nonlinear differential equations involving meromorphic functions is, in general, a highly nontrivial task.
In particular, equations involving higher-order derivatives of nonlinear differential monomials arise naturally in complex differential equations and uniqueness theory.
This section is devoted to a systematic investigation of the nonlinear differential equation
\begin{equation}\label{2.1}
	(f^n)^{(k)}(g^n)^{(k)}=\alpha^2,
\end{equation}
where $f$ and $g$ are non-constant meromorphic functions, $n$ and $k$ are positive integers, and $\alpha(\not\equiv 0)$ is a common small function of both $f$ and $g$.

Equation \eqref{2.1} may be viewed as a \emph{coupled nonlinear differential system} in the unknown functions $f$ and $g$.
The presence of powers, higher-order derivatives, and a small-function coefficient makes the existence and classification of non-constant solutions particularly delicate.

\subsection{Historical Development and Known Solvable Cases}

Determining whether \eqref{2.1} admits non-constant solutions is, in general, difficult.
To the best of our knowledge, Yang and Hua \cite{11} were the first to initiate a rigorous study of \eqref{2.1}.
They considered the simplest nontrivial case $k=1$ and $\alpha\equiv 1$, obtaining explicit exponential solutions.

\begin{theo2A}\cite[Theorem 3]{11}
	Let $f$ and $g$ be two non-constant entire functions and let $n$ be a positive integer.
	If
	\[
	f^n f^{(1)} g^n g^{(1)}=1,
	\]
	then
	\[
	f(z)=c_1 e^{-cz}, \quad g(z)=c_2 e^{cz},
	\]
	where $c$, $c_1$, and $c_2$ are non-zero constants satisfying
	$c^2(c_1c_2)^{n+1}=-1$.
\end{theo2A}

Subsequently, Yang and Hua extended their analysis to meromorphic functions.

\begin{theo2B}\cite[Theorem 2]{11}
	Let $f$ and $g$ be two non-constant meromorphic functions and let $n\geq 6$.
	If
	\[
	f^n f^{(1)} g^n g^{(1)}=1,
	\]
	then the solutions are of the same exponential form as in Theorem~2A.
\end{theo2B}

\subsection{Two Principal Directions of Extension}

From the standpoint of differential equation theory, two natural avenues emerge for extending \eqref{2.1}:

\begin{itemize}
	\item[(A)] replacing the constant $\alpha$ by a non-zero polynomial or rational function, thereby enlarging the solution space;
	\item[(B)] increasing the derivative order $k$, which leads to higher-order nonlinear differential equations capable of modeling more complex structures.
\end{itemize}

Both directions have significantly influenced subsequent research.

\subsection{Polynomial Right-Hand Side and Reduced Growth Conditions}

Fang and Qiu \cite{4} refined Theorems~2A and~2B by lowering the bound on $n$ and replacing constants by polynomials.

\begin{theo2C}\cite[Proposition 3]{4}
	Let $f$ and $g$ be non-constant entire functions.
	If
	\[
	f f^{(1)} g g^{(1)}=z^2,
	\]
	then either
	\[
	f(z)=c_1 e^{cz^2}, \quad g(z)=c_2 e^{-cz^2},
	\]
	with $4(cc_1c_2)^2=-1$, or
	\[
	f(z)=az, \quad g(z)=bz,
	\]
	where $(ab)^2=1$.
\end{theo2C}

\begin{theo2D}\cite[Proposition 2]{4}
	Let $f$ and $g$ be non-constant entire functions and $n\geq 2$.
	If
	\[
	f^n f^{(1)} g^n g^{(1)}=z^2,
	\]
	then
	\[
	f(z)=c_1 e^{-cz^2}, \quad g(z)=c_2 e^{-cz^2},
	\]
	with $4c^2(c_1c_2)^{n+1}=-1$.
\end{theo2D}

\begin{theo2E}\cite[Proposition 1]{4}
	Let $f$ and $g$ be non-constant meromorphic functions and $n\geq 4$.
	If
	\[
	f^n f^{(1)} g^n g^{(1)}=z^2,
	\]
	then the same conclusion as in Theorem~2D holds.
\end{theo2E}

\subsection{Higher-Order Derivatives and Exact Classification}

Fang \cite{3} solved \eqref{2.1} for $\alpha\equiv 1$ and higher-order derivatives.

\begin{theo2F}\cite{3}
	Let $f$ and $g$ be non-constant entire functions and let $n>k$.
	If
	\[
	(f^n)^{(k)}(g^n)^{(k)}=1,
	\]
	then
	\[
	f(z)=c_1 e^{cz}, \quad g(z)=c_2 e^{-cz},
	\]
	where $(-1)^k(c_1c_2)^n (nc)^{2k}=1$.
\end{theo2F}

Zhang \cite{16} further generalized this result.

\begin{theo2G}\cite[Proof of Theorem 4]{16}
	Let $f$ and $g$ be non-constant entire functions with $n>k+2$.
	If
	\[
	(f^n)^{(k)}(g^n)^{(k)}=z^2,
	\]
	then the solutions are exponential with quadratic constraints.
\end{theo2G}

\subsection{Common Poles}

Zhang and Xu \cite{14} introduced the common pole conditions.

\begin{theo2H}\cite[Proof of Theorem 1.3]{14}
	Let $f$ and $g$ be non-constant meromorphic functions with common poles and $n>k+10$.
	If
	\[
	(f^n)^{(k)}(g^n)^{(k)}=p^2,
	\]
	where $\deg p\leq 5$, then the conclusions (i)–(ii) stated in \cite{14} hold.
\end{theo2H}

\subsection{Normal Family Approach and the Open Problem}

Li and Yi \cite{7} employed normal family arguments to obtain the following classification for $\alpha\equiv 1$.

\begin{theo2I}\cite[Lemma 2.10]{7}
	Let $f$ and $g$ be non-constant meromorphic functions and $n>2k$.
	If
	\[
	(f^n)^{(k)}(g^n)^{(k)}=1,
	\]
	then $f$ and $g$ must be transcendental entire functions of exponential type.
\end{theo2I}

Majumder and Mandal \cite{8a} further generalized this result to polynomial right-hand sides.

\begin{theo2J}\cite[Lemma 12]{8a}
	Let $f$ and $g$ be transcendental meromorphic functions and let $n>\max\{2k,k+2\}$.
	If
	\[
	(f^n)^{(k)}(g^n)^{(k)}=p^2,
	\]
	then conclusions (i)–(ii) of \cite{8a} hold.
\end{theo2J}

\subsection{Critical Reassessment and Gap Identification}

Sahoo and Halder \cite{9a} were the first to consider \eqref{2.1} with $\alpha$ being a common small function.
However, a careful re-examination reveals fundamental flaws in their key lemma.

\begin{theo2K}\cite[Lemma 2.11]{9a}
	If $(f^n)^{(k)}(g^n)^{(k)}\equiv a^2$, then $f$ and $g$ admit exponential representations involving small functions.
\end{theo2K}

\subsection{Critical Gap Analysis and Failure of a Key Lemma}

We now turn to a crucial issue that directly motivates the present work.
A careful examination of the proof of Lemma~2.11 in Sahoo--Halder \cite{9a} reveals that the argument contains two fundamental flaws.
Since Lemma~2.11 serves as the backbone for the subsequent theorems in \cite{9a}, these issues significantly affect the validity of the conclusions drawn therein.
For clarity and completeness, we elaborate on these points in detail.

\medskip

\noindent\textbf{Error (a).}
In \cite[p.~378]{9a}, Sahoo--Halder asserted the following:
\begin{quote}
	``Therefore, we conclude that $f$ has no zero in $\mathbb{C}$, except possibly at the zeros and poles of $a(z)$. Similarly, we can show that $g$ has no zeros in $\mathbb{C}$, except possibly at the zeros and poles of $a(z)$.''
\end{quote}
This statement itself is correct.
However, Sahoo--Halder subsequently concluded from this observation that $N(r,0;f)=S(r,f) \,\,\text{and} \,\, N(r,0;g)=S(r,g)$. So these estimates were then used essentially in inequalities (2.6) and (2.8) of \cite[p.~379]{9a}.

In our view, this conclusion is not justified, as an important case was overlooked.
Indeed, let $z_{0}$ be a zero of $f$ with multiplicity $p_{0}$, and suppose that $z_{0}$ is a pole of $g$ with multiplicity $s_{0}$.
We consider the following two possibilities:\\
i) If $np_{0}-k > ns_{0}+k$, then from the identity
$(f^n)^{(k)}(g^n)^{(k)} \equiv a^2,$
	it follows that $z_{0}$ must be a zero of $a$ of multiplicity $\frac{n(p_{0}-s_{0})-2k}{2}.$

	However, in general, $\frac{n(p_{0}-s_{0})-2k}{2} \geq p_{0}$	does not necessarily hold.\\	
 ii) If $np_{0}-k < ns_{0}+k$, then $z_{0}$ becomes a pole of $a$ of multiplicity $\frac{n(s_{0}-p_{0})+2k}{2}.$	Again, there is no guarantee that $\frac{n(s_{0}-p_{0})+2k}{2} \geq p_{0}.$

In both cases, the above estimates prevent us from concluding that
$N(r,0;f)=S(r,f)$ (and similarly for $g$).
Consequently, the deduction used in \cite{9a} fails, and the subsequent inequalities relying on it are invalid.

\medskip

\noindent\textbf{Error (b).}
In \cite[p.~381]{9a}, the authors further claimed:
\begin{quote}
	``If $F_{1}$ has a pole at $\hat z$ with $\nu_{F_{1}}^{\infty}(\hat z)=\hat p$, then by (2.11), $\hat z$ must be a zero of $G_{1}$ with $\nu_{G_{1}}^{0}(\hat z)=\hat p$. So from (2.19), we have $2s=t_{1}$, which is impossible.''
\end{quote}

However, the case $s=1$ and $t_{1}=2$ was not considered.
In this situation, $s$ and $t_{1}$ are relatively prime integers and the relation $2s=t_{1}$ does indeed hold.
Thus, the asserted contradiction does not arise in general, and the argument breaks down.

\medskip

As a consequence of the above two errors, Lemma~2.11 of \cite{9a} does not hold in its stated form.
Since this lemma is fundamental to the proofs of the main theorems in \cite{9a}, the validity of those results is therefore questionable.
\begin{rem}
	The same oversight described in \emph{Error (b)} also appears in the proof of Lemma~2.8 in \cite{1a}.
	Fortunately, in that case the final conclusion remains valid as it can be recovered independently from Theorem~2I.
\end{rem}

\medskip

In conclusion, the problem of determining the precise form of solutions to the differential equation \eqref{2.1} in the case where $\alpha$ is a common small function of $f$ and $g$ remains unresolved.
This open problem provides the primary motivation for the present work.
In this paper, we focus on identifying explicit solutions of \eqref{2.1} for a particular class of common small functions $\alpha$ associated with $f$ and $g$.
Our results not only fill this gap but also substantially improve and unify the previously known results in the literature.
We now state our main results.

\begin{theo}\label{t2.1}
	Let $f$ and $g$ be two non-constant meromorphic functions having common poles, and let $\alpha$ be a non-zero small function of both $f$ and $g$ such that $0$ and $\infty$ are Borel exceptional values of $\alpha$.
	Assume further that $\rho(\alpha)<\rho(f)$.
	Let $k$ and $n$ be positive integers satisfying $n>k$, and suppose that
	\[
	(f^{n})^{(k)}(g^{n})^{(k)} \equiv \alpha^{2}.
	\]
	
	Then the following conclusions hold.
	
	\begin{enumerate}
		\item[\textnormal{(1)}] \textnormal{If $\rho(\alpha)>0$, then one of the following cases occurs:}
		\begin{enumerate}
			\item[\textnormal{(1A)}]
			\textnormal{$f=R_{1}e^{\delta_{1}}$ and $g=R_{2}e^{\delta_{2}}$, where $R_{1}$ and $R_{2}$ are meromorphic functions satisfying
				\[
				R_{i}=0 \mapsto \alpha=0, \qquad R_{i}=\infty \mapsto \alpha=\infty, \qquad \rho(R_{i})<\rho(\alpha), \quad i=1,2,
				\]
				and $\delta_{1}$ and $\delta_{2}$ are non-constant entire functions of finite order such that $\delta_{1}+\delta_{2}$ is a polynomial and $
				\rho(\alpha)=\deg(\delta_{1}+\delta_{2}).
				$}
			
			\item[\textnormal{(1B)}]
			\textnormal{$\alpha$ reduces to an entire function and $f=c_{1}e^{c\beta}, \,\, g=c_{2}e^{-c\beta},	$
				where
				$
				\beta(z)=\int_{0}^{z}\alpha(t)\,dt,
				$
				and $c$, $c_{1}$, $c_{2}$ are non-zero constants satisfying
				$
				-(nc)^{2}(c_{1}c_{2})^{n}=1.
				$}
		\end{enumerate}
		
		\item[\textnormal{(2)}] \textnormal{If $\rho(\alpha)=0$, then $\alpha$ reduces to a non-zero rational function and}
		$
		f=R_{1}e^{\delta_{1}}, \,\, g=R_{2}e^{-\delta_{1}},
		$
		\textnormal{where $R_{1}$ and $R_{2}$ are non-zero rational functions and $\delta_{1}$ is a non-constant polynomial with $\deg(\delta_{1})=1$, satisfying}
	$2\deg(\alpha^{2})=n\deg(R_{1}R_{2}).$
		\textnormal{In particular, if $\alpha\equiv 1$, then}
		$
		f(z)=c_{1}e^{cz}, \,\,g(z)=c_{2}e^{-cz},
		$
		\textnormal{where $c$, $c_{1}$, and $c_{2}$ are non-zero constants such that}
		$
		(-1)^{k}(c_{1}c_{2})^{n}(nc)^{2k}=1.
		$
	\end{enumerate}
\end{theo}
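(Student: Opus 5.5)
First I would show that $f$ and $g$ have no poles and essentially no zeros outside the zeros and poles of $\alpha$. Take a pole $z_0$ of $f$ that is not a zero or pole of $\alpha$. Because $f$ and $g$ have common poles, $z_0$ is also a pole of $g$. Then $(f^n)^{(k)}(g^n)^{(k)}$ has a pole of order at least $2(n+k)$ at $z_0$, whereas $\alpha^2$ is analytic and non-zero there, which is impossible. So every pole of $f$ or $g$ is a zero or pole of $\alpha$. Next take a zero $z_0$ of $f$ of multiplicity $p$ that is not a zero or pole of $\alpha$. If $g$ is analytic at $z_0$, then $(f^n)^{(k)}$ vanishes there to order $np-k\ge n-k>0$, which is a contradiction. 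If $g$ has a pole at $z_0$, then $f$ and $g$ share that pole by hypothesis, which is again a contradiction. Hence the zeros and poles of $f$ and $g$ lie in the zero and pole set of $\alpha$.

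Because $0$ and $\infty$ are Borel exceptional for $\alpha$ and $\rho(\alpha)<\rho(f)$, these counting functions have exponent of convergence less than $\rho(\alpha)$. When $\rho(\alpha)=0$ they are $O(\log r)$, so there are finitely many such points and $\alpha$ is rational. This is exactly where Error (a) is avoided. I do not claim $N(r,0;f)=S(r,f)$ from multiplicities. I only use that the zero and pole sets of $f$ and $g$ are contained in those of $\alpha$. Then Hadamard factorisation, with canonical products of order at most $\max\{\rho_1(\alpha),\rho_1(1/\alpha)\}<\rho(\alpha)$, gives $f=R_1e^{\delta_1}$ and $g=R_2e^{\delta_2}$ with $\rho(R_i)<\rho(\alpha)$ and the stated implications $R_i=0\mapsto\alpha=0$ and $R_i=\infty\mapsto\alpha=\infty$. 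Here I would also need to control multiplicities, which comes from the local order computation above. The $\delta_i$ are non-constant, since otherwise $\rho(f)=\rho(R_1)<\rho(\alpha)<\rho(f)$, a contradiction. Theorem 1.C applied to $\alpha$ gives $\alpha=R_0e^{P}$ with $P$ a polynomial of degree $\rho(\alpha)$, or a rational $\alpha$ when $\rho(\alpha)=0$.

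Substituting, $(f^n)^{(k)}=Q_1e^{n\delta_1}$, where $Q_1$ is a differential polynomial in $R_1$ and $\delta_1'$, and similarly $(g^n)^{(k)}=Q_2e^{n\delta_2}$. The equation becomes
\[
Q_1Q_2e^{n(\delta_1+\delta_2)}=R_0^2e^{2P}.
\]
I would then compare growth. The $Q_i$ are small relative to $e^{n\delta_i}$ by Theorem 1.A (logarithmic derivative) together with $\rho(R_i)<\rho(\alpha)$. The aim is to show that $e^{n(\delta_1+\delta_2)-2P}$ equals $R_0^2/(Q_1Q_2)$, which has order less than $\rho(\alpha)$ or is $S(r,\cdot)$. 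From this, $n(\delta_1+\delta_2)-2P$ is a polynomial of degree less than $\deg P$, or a constant. Hence $\delta_1+\delta_2$ is a polynomial of degree $\rho(\alpha)$ when $\deg(\delta_1+\delta_2)\ge \deg P$; this is case (1A). The degenerate case $\delta_1+\delta_2$ constant is the one I expect to lead to (1B). There $e^{2P}$ must be absorbed by $Q_1Q_2$, which forces $R_1$ and $R_2$ to be constant and $\delta_1'$ to be proportional to $\alpha$. This comes from the case $k=1$ style identity $n^2c_1^nc_2^n\delta_1'\delta_2'=\alpha^2$, obtained after showing that the $Q_i$ reduce to $(n\delta_i')^k$ times constants. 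That gives $\delta_1=c\beta$, $\alpha$ entire, and $-(nc)^2(c_1c_2)^n=1$.

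In case (2) the same argument gives $\delta_1+\delta_2$ constant and the $Q_i$ rational. The degree condition then forces $\delta_1'$ to be constant, since otherwise the highest-degree terms $(n\delta_1')^k(n\delta_2')^k=(-1)^k(n\delta_1')^{2k}$ would have positive degree exceeding what the rational identity allows. This gives $\deg\delta_1=1$ and, by comparing degrees at infinity, $2\deg(\alpha^2)=n\deg(R_1R_2)$, or the appropriate degree identity. For $\alpha\equiv1$ the factors $R_i$ must be constants, and Theorem 2F's computation gives $(-1)^k(c_1c_2)^n(nc)^{2k}=1$. The main obstacle is the growth comparison step. Proving that $Q_1Q_2$ genuinely has lower order than $e^{P}$ requires handling $\delta_i$ of possibly high finite order, and ruling out that $Q_1Q_2$ contains exponential factors. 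I would first try Theorem 1.B using $\rho(R_i)<\rho(\alpha)\le\mu$ of the exponentials.
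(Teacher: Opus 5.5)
Your opening reduction agrees with the paper. Using $n>k$ and the common-pole hypothesis, all zeros and poles of $f$ and $g$ lie over zeros and poles of $\alpha$. Hadamard factorisation then gives $f=R_1e^{\delta_1}$, $g=R_2e^{\delta_2}$ with $\rho(R_i)<\rho(\alpha)$. When $\rho(f)<\infty$, the $\delta_i$ are polynomials, $\rho(Q_i)<\rho(\alpha)$, and your comparison gives (1A) exactly as in the paper's Sub-case 1.1.

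\textbf{Gap in part (1).} The real difficulty is $\rho(f)=\infty$, which you leave as ``the main obstacle''. That is where (1B) and the transcendental $\delta_i$ of (1A) occur. Theorem 1.B cannot handle it, for three reasons.
\begin{itemize}
\item $Q_i$ contains $\delta_i'$, whose order is a priori unknown and possibly infinite, so there is no order inequality to feed into Theorem 1.B.
\item The smallness $T(r,Q_i)=S(r,f)$ that Theorem 1.A does give only yields $T(r,e^{n(\delta_1+\delta_2)})=S(r,f)$. It does not show that $\delta=\delta_1+\delta_2$ is a polynomial.
\item Your target ``$R_0^2/(Q_1Q_2)$ has order $<\rho(\alpha)$'' fails precisely in (1B), where $Q_1Q_2$ is a constant multiple of $\alpha^2$.
\end{itemize}

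What is needed is a proof of the (1A)/(1B) dichotomy, and the paper brings in new tools for it.
\begin{itemize}
\item For $k\ge2$: if $\rho(\delta_1)=\infty$, then $N(r,f^n)+N(r,0;f^n)+N(r,0;(f^n)^{(k)})\le 2(n+1)T(r,\alpha)=S(r,(f^n)'/f^n)$. Lemma~\ref{l2.3} then forces $f=e^{az+b}$, a contradiction.
\item For $k=1$: the paper uses the auxiliary functions $h=1/(fg)$ and $\xi=g'/g+h'/(2h)$, which satisfy $\xi^2=\frac14(h'/h)^2-\alpha^2h^n/n^2$. A Nevanlinna analysis of this identity gives $\frac n2T(r,h)=S(r,h)$ unless $\delta$ is a polynomial.
\item The paper then splits on $\deg\delta$ versus $\rho(\alpha)$. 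Equality gives (1A). Otherwise, Lemma~\ref{l2.6a} (applied to $\delta_1'$ and the small functions $-(R_1^n)'/(nR_1^n)$, $(R_2^n)'/(nR_2^n)+\delta'$) together with Lemma~\ref{l2.3} rules out everything except $k=1$ with $(R_1^n)'/R_1^n+(R_2^n)'/R_2^n+n\delta'\equiv0$.
\item Integrating that identity gives $(R_1R_2)^n=c_0e^{-n\delta}$. The common-pole hypothesis (a zero of $R_1$ cannot be cancelled by a pole of $R_2$) then makes $R_1,R_2$ constant, hence $\delta$ is constant and $\delta_1'=c\alpha$.
\end{itemize}
Your (1B) paragraph needs $k=1$, constant $R_i$, and $Q_i$ equal to constants times $(n\delta_i')^k$. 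It gives no mechanism for any of these.

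\textbf{Problems in part (2).} Part (2) has a more serious problem.
\begin{itemize}
\item ``The same argument'' is not available, since $\rho(\alpha)=0$ puts no bound on $\rho(f)$. The paper bounds the order through a normal-family dichotomy for the translates of $F=f^n/\alpha$, using Theorem 1.D, Lemma~\ref{l2.8} and Lemma 1.A.
\item Your degree step is wrong. With $\delta_2=-\delta_1+c$, comparing degrees at $\infty$ gives only $2k\deg\delta_1'=2\deg\alpha-n\deg(R_1R_2)$. This does not force $\delta_1'$ to be constant when $\alpha$ is non-constant.
\end{itemize}

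In fact no argument can force it. Take $k=1$, $n\ge2$, $f=e^{z^2}$, $g=e^{-z^2}$ and $\alpha=2inz$. Then $(f^n)'(g^n)'=-4n^2z^2=\alpha^2$, and every hypothesis holds: $f,g$ have no poles, $\alpha$ is rational so $0,\infty$ are Borel exceptional, and $\rho(\alpha)=0<2=\rho(f)$. Yet $\deg\delta_1=2$ and $2\deg(\alpha^2)=4\neq0=n\deg(R_1R_2)$. More generally, with $k=1$, the triple $f=e^{p}$, $g=e^{-p}$, $\alpha=inp'$ is a solution for every non-constant polynomial $p$. These are exactly solutions of (1B) form with $\rho(\alpha)=0$, and they are missing from (2).

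So conclusion (2) is false as stated, at least for $k=1$ and non-constant $\alpha$. The special case $\alpha\equiv1$ is fine: $f,g$ are then zero-free entire and Theorem 2.F applies. The paper's normal-family derivation of $\deg\delta_1=1$ must therefore also break down here. For example, its final step $h_j(0)=\rho_j^{-k}f^n(\omega_j+z_j)/\alpha(\omega_j+z_j)\to\infty$ ignores that $\alpha(\omega_j+z_j)\to\infty$ for a non-constant polynomial $\alpha$. It cannot be used to rescue your degree argument.
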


Clearly Theorem \ref{t2.1} is an improvement of Theorem 2.H.

\begin{theo}\label{t2.2}
	Let $f$ and $g$ be two non-constant meromorphic functions and let $\alpha$ be a non-zero small function of both $f$ and $g$ such that $0$ and $\infty$ are Borel exceptional values of $\alpha$,  $\rho(\alpha)<\rho(f)$ and let $k$ and $n$ be positive integers satisfying  $n>2k$. Suppose
	\[
	(f^n)^{(k)} (g^n)^{(k)} \equiv \alpha^2.
	\]
	
	Then the following cases occur:
	
		\begin{enumerate}
		\item[\em (1)] If $\rho(\alpha)>0$ and $\alpha$ has finitely many zeros and poles, then
	$f = R_1 e^{\delta_1}, \quad g = R_2 e^{\delta_2},$
		where $R_1$, $R_2$ are non-zero rational functions and $\delta_1$, $\delta_2$ are non-constant polynomials satisfying
		\[
		\deg(\delta_1) = \deg(\delta_2), \quad n(\delta_1 + \delta_2) - P = \text{constant}, \quad 2k\,\deg(\delta_1^{(1)}) = 2\deg(\alpha) - n\,\deg(R_1 R_2).
		\]
		
		\item[\em (2)] If $\rho(\alpha)>0$ and $\alpha$ has infinitely many zeros or poles, then one of the following occurs:
		
		\begin{enumerate}
			\item[\em (2A)] $$f = R_1 e^{\delta_1}, \quad g = R_2 e^{\delta_2},$$ where $\overline{\rho}_1(0;R_i)<\rho(\alpha)$, $\overline{\rho}_1(\infty;R_i)<\rho(\alpha)$ for $i=1,2$, $R_1 R_2$ is a small function of $\alpha$, and $\delta_1$, $\delta_2$ are non-constant polynomials with
			\[
			\deg(\delta_1 + \delta_2) = \rho(\alpha).
			\]
			
			\item[\em (2B)] If $\rho_2(f)<+\infty$, then
			\[
			f = R_1 e^{\delta_1}, \quad g = R_2 e^{\delta_2},
			\]
			where $\overline{\rho}_1(0;R_i)<\rho(\alpha)$, $\overline{\rho}_1(\infty;R_i)<\rho_2(f)$ for $i=1,2$, and $\delta_1$, $\delta_2$ are transcendental entire functions such that $\delta_1 + \delta_2$ is a polynomial.
			
			\item[\em (2C)] If $\rho_2(f) = +\infty$, then
			\[
			f = R_1 e^{\delta_1}, \quad g = R_2 e^{\delta_2},
			\]
			where $\overline{\rho}_1(0;R_i)<\rho(\alpha)$, $\overline{N}(r,R_i)=S(r,f)$ for $i=1,2$, and $\delta_1$, $\delta_2$ are transcendental entire functions.
		\end{enumerate}
		
		\item[\em (3)] If $\rho(\alpha) = 0$, then $\alpha$ reduces to a non-zero rational function, and
		\[
		f = R_1 e^{\delta_1}, \quad g = R_2 e^{\delta_2},
		\]
		where $R_1$, $R_2$ are non-zero rational functions and $\delta_1$, $\delta_2$ are non-constant polynomials of degree at most two satisfying
		\[
		\delta_1^{(1)} = - \delta_2^{(1)}, \quad 2k\,\deg(\delta_1^{(1)}) = 2\deg(\alpha) - n\,\deg(R_1 R_2).
		\]
		In particular, if $\alpha \equiv 1$, then
		\[
		f(z) = c_1 e^{cz}, \quad g(z) = c_2 e^{-cz},
		\]
		where $c, c_1, c_2$ are non-zero constants satisfying
		\[
		(-1)^k (c_1 c_2)^n (n c)^{2k} = 1.
		\]
	\end{enumerate}
\end{theo}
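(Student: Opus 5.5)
Our plan is to reduce \eqref{2.1} to a statement that $f$ and $g$ have few zeros and poles, and then to read off the exponential representations from Hadamard-type factorizations. We begin by writing $F=f^n$ and $G=g^n$, so that $F^{(k)}G^{(k)}\equiv\alpha^2$.

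\textbf{Step 1: zeros and poles of $f$ and $g$.}
Let $z_0$ be a zero of $f$ that is neither a zero nor a pole of $\alpha$, and let its multiplicity be $p$. Then $F^{(k)}$ has a zero of order $np-k\geq n-k>0$ at $z_0$. Hence $G^{(k)}$, and so $g$, must have a pole at $z_0$, say of order $s$. Balancing orders in \eqref{2.1} gives
\[
np-k=ns+k, \qquad\text{that is,}\qquad n(p-s)=2k .
\]
Symmetrically, a pole of $f$ at a point where $\alpha\neq0,\infty$ forces a zero of $g$ with the same relation, and a common pole of $f$ and $g$ is impossible, since there the left side of \eqref{2.1} has a pole. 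Since $n>2k$, the equation $n(p-s)=2k$ has no integer solution. Therefore every zero and pole of $f$ and $g$ lies among the zeros and poles of $\alpha$. This is exactly the point where Error (a) was mishandled in \cite{9a}. We do \emph{not} claim $N(r,0;f)=S(r,f)$. We only record
\[
\overline{\rho}_1(0;f),\ \overline{\rho}_1(\infty;f)\ \leq\ \max\{\rho_1(0;\alpha),\rho_1(\infty;\alpha)\}<\rho(\alpha),
\]
using that $0$ and $\infty$ are Borel exceptional for $\alpha$, and the same bounds for $g$.

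\textbf{Step 2: factorization.}
By Step 1 we may write $f=R_1e^{\delta_1}$ and $g=R_2e^{\delta_2}$. Here $R_i$ is a canonical product, or a quotient of canonical products, built on the zeros and poles of $f$ (respectively $g$), and $\delta_i$ is entire. When $\rho(\alpha)=0$, Borel exceptionality gives $N(r,0;\alpha),N(r,\infty;\alpha)=O(\log r)$. Hence $\alpha$ and the $R_i$ are rational, and case (3) is in force. In case (1), $\alpha$ has finitely many zeros and poles, so again the $R_i$ are rational.

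Next we substitute into \eqref{2.1}. A direct induction gives
\[
(f^n)^{(k)}=R_1^n e^{n\delta_1}P_k\bigl(n\delta_1',\dots, (R_1^n)'/R_1^n,\dots\bigr),
\]
where $P_k$ is a differential polynomial with leading term $(n\delta_1')^k$. Equation \eqref{2.1} then reads
\[
e^{n(\delta_1+\delta_2)}=\frac{\alpha^2}{R_1^nR_2^n\,P_kQ_k},
\]
with $Q_k$ the analogous differential polynomial for $g$. The right-hand side has growth controlled by $\alpha$, the $R_i$, and the $\delta_i'$.

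When $\rho_2(f)<\infty$, or $f$ has finite order, we take logarithmic derivatives and use Theorem 1.A to get $T(r,e^{\delta_1+\delta_2})=O(T(r,\alpha))+S(r,f)$. Since $\rho(\alpha)<\rho(f)$, Theorem 1.B then makes $e^{\delta_1+\delta_2}$ small relative to $f$, and its order is at most $\rho(\alpha)$. Because $e^{\delta_1+\delta_2}$ has finite order, $\delta_1+\delta_2$ is a polynomial, and Theorem 1.C (with $0,\infty$ as two Borel exceptional values) shows that $\rho(\alpha)$ is an integer, equal to $\deg(\delta_1+\delta_2)$ whenever this sum is non-constant. This yields (2A) when the $\delta_i$ are polynomials and (2B) when they are transcendental. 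The bound on $\overline{\rho}_1(\infty;R_i)$ there comes from Step 1 together with $\rho_2(f)$. In case (2C) only the Nevanlinna bound $\overline{N}(r,R_i)=S(r,f)$ survives, again from Step 1.

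\textbf{Step 3: the rational cases (1) and (3).}
Here $R_i$ and $\alpha$ are rational (in (1), $\alpha=R e^{P}$ with $P$ a polynomial). The equation becomes
\[
e^{n(\delta_1+\delta_2)-P}=\frac{\text{rational}}{P_kQ_k},
\]
where $P_k,Q_k$ are polynomials in $\delta_i'$ and rational functions. Comparing with a zero-free entire function through Borel's lemma, or simply noting that the left side has no zeros or poles, forces $e^{n(\delta_1+\delta_2)-P}$ to be a rational zero-free entire function, hence constant. Thus $n(\delta_1+\delta_2)-P$ is constant.

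Comparing degrees at $\infty$ in $P_kQ_k$, whose leading behaviour is $(n\delta_1')^k(n\delta_2')^k$, gives $\deg\delta_1=\deg\delta_2$ and
\[
2k\deg(\delta_1')=2\deg\alpha-n\deg(R_1R_2).
\]
In case (3), $P$ is constant, so $\delta_1'=-\delta_2'$. A finer comparison of the lower-order terms of $P_kQ_k$, which must be a rational function with no exponential part and degree fixed by $\alpha$, shows $\deg\delta_1\leq2$. For $\alpha\equiv1$ we would show that $R_i$ is constant and $\delta_1$ is linear, and the constant relation follows from evaluating $(c_1^ne^{ncz})^{(k)}(c_2^ne^{-ncz})^{(k)}$.

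\textbf{Main obstacle.}
The main obstacle is Step 2 in the transcendental setting, namely showing that $e^{n(\delta_1+\delta_2)}$ is small, i.e.\ $T(r,P_k)$ and $T(r,Q_k)$ are $S(r,f)$, without the false estimate $N(r,0;f)=S(r,f)$. We would try to control $N(r,\cdot)$ of $P_k,Q_k$ purely through the zeros and poles of $\alpha$, using the exponent bounds from Step 1, and to control the proximity functions via Theorem 1.A. In the infinite hyper-order case (2C), only the weak conclusion is claimed, precisely because this step fails there.
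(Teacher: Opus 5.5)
There is a genuine gap, and it sits in the pole half of your Step 1. Your treatment of zeros is correct and gives the same bound the paper uses, $\overline N(r,0;f)\le N(r,0;\alpha)+N(r,\alpha)$. But the pole case is not symmetric. Take a pole $z_1$ of $f$ of order $p$ with $\alpha(z_1)\neq 0,\infty$. The equation only forces $(g^n)^{(k)}$ to vanish at $z_1$ to order exactly $np+k$; it does not force $g(z_1)=0$. In fact balancing orders shows $g(z_1)\neq 0,\infty$, since a zero of $g$ of order $q$ would need $nq-k=np+k$. So no contradiction arises. Such poles are exactly those counted by $\overline N_E(r,\infty;f^n\mid (g^n)^{(k)}=0\mid g^n\neq 0)$, which the zeros and poles of $\alpha$ cannot control.

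The paper replaces your pointwise argument by a Nevanlinna estimate. It writes $(k+1)\overline N(r,f)=N_{k+1}(r,(f^n)^{(k)})\le N_{k+1}(r,0;(g^n)^{(k)})+(k+1)N(r,\alpha)$, applies Lemma~\ref{l2.6} to $g^n$ together with the zero bound for $g$, and adds the symmetric inequality to get
\[
\overline N(r,f)+\overline N(r,g)\le (6k+4)\bigl[N(r,0;\alpha)+N(r,\alpha)\bigr]+O\bigl(\log T(r,f)+\log T(r,g)+\log r\bigr).
\]
The error term $O(\log T(r,f))$ is why (2B) is phrased through $\rho_2(f)$ and why (2C) only gives $\overline N(r,R_i)=S(r,f)$. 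Your Step 1 would give $\overline\rho_1(\infty;R_i)<\rho(\alpha)$ in every case, which is more than the theorem claims and should have been a warning sign. The same gap undermines your Step 2 bound $T(r,e^{\delta_1+\delta_2})=O(T(r,\alpha))+S(r,f)$. It also undermines the smallness of $R_1R_2$, $\tilde P$, $\tilde Q$ relative to $\alpha$, which is needed for the equality $\deg(\delta_1+\delta_2)=\rho(\alpha)$ in (2A). The paper gets that equality from order bounds below $\rho(\alpha)=\mu(\alpha)$ plus Theorem 1.B, not from Theorem 1.C.

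With the correct pole estimate, your treatment of cases (1) and (3) also breaks down. Finitely many zeros and poles of $\alpha$ give finitely many poles of $f$ and $g$, hence rational $R_i$, only once you know $f$ has finite order. Step 3 also tacitly assumes $\delta_1,\delta_2$ are polynomials. Otherwise $P_k,Q_k$ are not rational, and the step ``zero-free and rational, hence constant'' is circular. Nothing in your proposal rules out transcendental $\delta_i$ (for instance $\rho(f)=\infty$) in these cases. Likewise, $\deg\delta_i\le 2$ in (3) does not come from comparing lower-order terms.

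The missing ingredient is the paper's normal-family argument for $\{f^n(z+\omega)/\alpha(z+\omega)\}_{\omega\in\mathbb{C}}$:
\begin{itemize}
\item If the family is normal, Marty's theorem and Lemma~\ref{l2.8} give $\rho(f)\le 2$, which is the real source of $\deg\delta_i\le 2$. The pole estimate above then yields finitely many poles.
\item If it is not normal, a Zalcman rescaling leads to a contradiction. In case (1) with $\rho(f)=\infty$, non-normality is forced by Lemma~\ref{l2.7}. There the rescaling radii decay faster than any power of $|\omega_j|$, so that $\rho_j\alpha'/\alpha\to 0$ despite the factor $e^{P}$ in $\alpha$.
\end{itemize}
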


\begin{table}[htbp]
	\centering
	\footnotesize
	\renewcommand{\arraystretch}{1.15}
	\begin{tabular}{||
			>{\centering\arraybackslash}p{1.4cm}||
			>{\centering\arraybackslash}p{1.8cm}||
			>{\centering\arraybackslash}p{1.4cm}||
			>{\centering\arraybackslash}p{2.5cm}||
			>{\centering\arraybackslash}p{2.0cm}||
			>{\centering\arraybackslash}p{3.0cm}||
		}
		\hline\hline
		\rowcolor{gray!25}
		\textbf{Theorems}
		& \textbf{Class of $f,g$}
		& \textbf{Common Poles}
		& \textbf{$k,n$}
		& \textbf{RHS $\alpha$}
		& \textbf{Conclusion} \\
		\hline\hline
		
		\rowcolor{blue!2}
		2A \cite{11}
		& Entire
		& No
		& $k=1,\; n\ge1$
		& $1$
		& Exponential \\
		\hline\hline
		
		\rowcolor{blue!6}
		2B \cite{11}
		& Meromorphic
		& No
		& $k=1,\; n\ge6$
		& $1$
		& Exponential \\
		\hline\hline
		
		\rowcolor{blue!10}
		2C \cite{4}
		& Entire
		& No
		& $k=1,\; n=1$
		& $z^2$
		& Exponential / Linear \\
		\hline\hline
		
		\rowcolor{blue!14}
		2D \cite{4}
		& Entire
		& No
		& $k=1,\; n\ge2$
		& $z^2$
		& Exponential \\
		\hline\hline
		
		\rowcolor{blue!18}
		2E \cite{4}
		& Meromorphic
		& No
		& $k=1,\; n\ge4$
		& $z^2$
		& Exponential \\
		\hline\hline
		
		\rowcolor{teal!22}
		2F \cite{3}
		& Entire
		& No
		& $n>k$
		& $1$
		& Exponential \\
		\hline\hline
		
		\rowcolor{teal!26}
		2G \cite{16}
		& Entire
		& No
		& $n>k+2$
		& $z^2$
		& Exponential \\
		\hline\hline
		
		\rowcolor{teal!30}
		2H \cite{14}
		& Meromorphic
		& Yes
		& $n>k+10$
		& $p^2,\;\deg p\le5$
		& $e^{\pm c\int p}$ \\
		\hline\hline
		
		\rowcolor{teal!34}
		2I \cite{7}
		& Meromorphic
		& No
		& $n>2k$
		& $1$
		& Transcendental exponential \\
		\hline\hline
		
		\rowcolor{teal!36}
		2J \cite{8a}
		& Transdental\ meromorphic
		& No
		& $n>\max\{2k,k+2\}$
		& $p^2$
		& $e^{\pm c\int p}$ \\
		\hline\hline
		
		\rowcolor{red!35}
		2K \cite{9a}
		& Meromorphic
		& No
		& $n>2k$
		& Small fn.
		& \textbf{Invalid} \\
		\hline\hline
		
		\rowcolor{orange!35}
		\textbf{2.1}
		& Meromorphic
		& \textbf{Yes}
		& $n>k$
		& Small function, $\rho(\alpha)<\rho(f)$
		& \textbf{Complete classification} \\
		\hline\hline
		
		\rowcolor{orange!40}
		\textbf{2.2}
		& Meromorphic
		& No
		& $n>2k$
		& Small function
		& \textbf{Most general forms} \\
		\hline\hline
	\end{tabular}
	\hspace{8cc}
	\caption{Comparison of solvable cases of $(f^n)^{(k)}(g^n)^{(k)}=\alpha^2$.}
	\label{tab:historical-comparison}
\end{table}

\newpage In the proof of Theorem \ref{t2.1}, we make use of the following key lemmas.

\begin{lem} \label{l2.1}\cite[Lemma 3.5]{5} Let $F$ be a non-constant meromorphic and let $f=\frac{F^{(1)}}{F}$. Then for $n\geq 1$, \beas \frac{F^{(n)}}{F}=f^{n}+\frac{n(n-1)}{2}f^{n-2}f^{(1)}+a_{n}f^{n-3}f^{(2)}+b_{n}f^{n-4}(f^{(1)})^{2}+P_{n-3}(f),\eeas
where $a_{n}=\frac{1}{6}n(n-1)(n-2)$, $b_{n}=\frac{1}{8}n(n-1)(n-2)(n-3)$ and $P_{n-3}(f)$ is a differential polynomial with constant coefficients, which vanishes identically for $n\leq 3$ and has degree $n-3$ when $n>3$.
\end{lem}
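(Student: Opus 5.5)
The plan is to prove the identity by induction on $n$, based on the elementary recursion for $Q_{n}:=F^{(n)}/F$. Since $F^{(n+1)}=(Q_{n}F)^{(1)}=Q_{n}^{(1)}F+Q_{n}F^{(1)}$, we get
\[
Q_{n+1}=Q_{n}^{(1)}+fQ_{n},\qquad Q_{0}=1,
\]
with $f=F^{(1)}/F$. It follows at once that each $Q_{n}$ is a differential polynomial in $f$ with constant coefficients. An induction on this recursion also shows that all its coefficients are non-negative integers. Two further invariants will be used. First, if $f^{(j)}$ is given weight $j+1$, every monomial of $Q_{n}$ has weight exactly $n$, because both $Q\mapsto Q^{(1)}$ and $Q\mapsto fQ$ raise the weight by one. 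Second, non-negativity means no cancellation can ever occur between terms.

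\textbf{Base cases.} For $n\le 3$ I would check the identity directly:
\[
Q_{1}=f,\qquad Q_{2}=f^{2}+f^{(1)},\qquad Q_{3}=f^{3}+3ff^{(1)}+f^{(2)}.
\]
These agree with the formula, since $a_{1}=a_{2}=0$, $a_{3}=1$, $b_{1}=b_{2}=b_{3}=0$ and $P_{n-3}\equiv 0$ in these cases.

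\textbf{Inductive step.} Assume the formula for $Q_{n}$ and compute $Q_{n}^{(1)}+fQ_{n}$, tracking the four distinguished monomials of $Q_{n+1}$.
\begin{itemize}
\item $f^{n+1}$ arises only from $f\cdot f^{n}$, so its coefficient is $1$.
\item $f^{n-1}f^{(1)}$ receives $n$ from $(f^{n})^{(1)}$ and $\tfrac{n(n-1)}{2}$ from $f\cdot\tfrac{n(n-1)}{2}f^{n-2}f^{(1)}$, giving $\tfrac{(n+1)n}{2}$.
\item $f^{n-2}f^{(2)}$ receives $\tfrac{n(n-1)}{2}$ from differentiating the $f^{(1)}$ factor in $\tfrac{n(n-1)}{2}f^{n-2}f^{(1)}$, and $a_{n}$ from $f\cdot a_{n}f^{n-3}f^{(2)}$. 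The sum is $\tfrac{n(n-1)}{2}+\tfrac{n(n-1)(n-2)}{6}=\tfrac{(n+1)n(n-1)}{6}=a_{n+1}$.
\item $f^{n-3}(f^{(1)})^{2}$ receives $\tfrac{n(n-1)(n-2)}{2}$ from differentiating the factor $f^{n-2}$, and $b_{n}$ from $f\cdot b_{n}f^{n-4}(f^{(1)})^{2}$. The sum is $\tfrac{n(n-1)(n-2)}{2}+\tfrac{n(n-1)(n-2)(n-3)}{8}=\tfrac{(n+1)n(n-1)(n-2)}{8}=b_{n+1}$.
\end{itemize}
One should confirm that none of the remaining terms produces any of these four monomials. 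Every remaining term contains either a factor $f^{(j)}$ with $j\ge 3$, or the product $f^{(1)}f^{(2)}$, or has total degree at most $n-2$. A weight-and-degree comparison therefore separates them from the four monomials above.

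\textbf{The remainder.} I then define $P_{n-2}$ as the sum of everything left over:
\[
a_{n}(n-3)f^{n-4}f^{(1)}f^{(2)},\quad a_{n}f^{n-3}f^{(3)},\quad 2b_{n}f^{n-4}f^{(1)}f^{(2)},\quad b_{n}(n-4)f^{n-5}(f^{(1)})^{3},\quad P_{n-3}^{(1)},\quad fP_{n-3}.
\]
The main obstacle is the degree claim, namely that $P_{n-2}$ has degree exactly $n-2$ when $n+1>3$. Differentiation does not change the degree of a monomial (counting factors $f^{(j)}$), and multiplication by $f$ raises it by one. Hence every term above has degree at most $n-2$. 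The monomial $f^{n-3}f^{(3)}$ has degree exactly $n-2$ and coefficient $a_{n}>0$ for $n\ge 3$. Since all coefficients are non-negative, it cannot be cancelled, so the degree is exactly $n-2$. This closes the induction.
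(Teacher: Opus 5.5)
Your induction via $Q_{n+1}=Q_{n}^{(1)}+fQ_{n}$ is correct. This includes the coefficient bookkeeping for $a_{n+1}$ and $b_{n+1}$, the degree separation of the four distinguished monomials from the remainder, and the use of non-negative coefficients to obtain the exact degree $n-2$. The paper gives no proof of its own and simply cites Hayman's Lemma 3.5, which is proved by this same induction on $F^{(n+1)}/F=(F^{(n)}/F)^{(1)}+f\,F^{(n)}/F$, so your argument is essentially the standard one.
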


\begin{lem}\label{l2.2}\cite{10} Let $f$ be a non-constant meromorphic function and let $a_{n}(\not\equiv 0)$,
$a_{n-1},\ldots, a_{0}$ be small functions of $f$. Then
$T(r,a_{n}f^{n} + a_{n-1}f^{n-1} +\ldots+a_{1}f + a_{0})= nT(r,f) + S(r,f)$.
\end{lem}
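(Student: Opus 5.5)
The statement is Lemma \ref{l2.2}, the Valiron--Mohon'ko type identity. I would prove it by splitting $T=m+N$ and estimating the two parts separately, using only the first main theorem and the smallness of the coefficients. Write $P(f)=a_nf^n+\cdots+a_0$.

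For the upper bound, $T(r,P(f))\le nT(r,f)+S(r,f)$ is routine: $m(r,P(f))\le n\,m(r,f)+\sum_j m(r,a_j)+O(1)$, and at any point the pole order of $P(f)$ is at most $n$ times the pole order of $f$ plus the pole orders of the $a_j$. Hence $N(r,P(f))\le nN(r,f)+\sum_j N(r,a_j)$.

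For the lower bound I treat the proximity and counting functions separately.

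\emph{Proximity part.} On the set $E_r=\{\theta:|f(re^{i\theta})|\ge 1+2\max_{j<n}|a_j/a_n|(re^{i\theta})\}$ we have
\[
|P(f)|\ge \tfrac12|a_n||f|^n,
\]
so $\log^+|f|^n\le \log^+|P(f)|+\log^+(1/|a_n|)+O(1)$. On the complement, $\log^+|f|\le \sum_j\log^+|a_j/a_n|+O(1)$. Integrating gives
\[
n\,m(r,f)\le m(r,P(f))+O\Bigl(\sum_j (m(r,a_j)+m(r,1/a_n))\Bigr)+O(1)=m(r,P(f))+S(r,f),
\]
using $T(r,1/a_n)=T(r,a_n)+O(1)$.

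\emph{Counting part.} Let $z_0$ be a pole of $f$ of order $p$ that is neither a zero nor a pole of any $a_j$. Then $z_0$ is a pole of $P(f)$ of order exactly $np$. The exceptional poles lie among the zeros of $a_n$ and the poles of the $a_j$. At such a point the pole order of $P(f)$ is at least $np$ minus the multiplicity of the zero of $a_n$ there, and minus the contributions from the poles of the $a_j$. Summing, I get
\[
nN(r,f)\le N(r,P(f))+nN(r,1/a_n)+n\sum_j N(r,a_j),
\]
and the last two terms are $S(r,f)$.

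The main obstacle is making this multiplicity bookkeeping precise. The order cannot drop below $np-\mathrm{ord}_{z_0}(a_n)$ once the pole orders of the $a_j$ are accounted for, because the $a_nf^n$ term dominates the others whenever $p$ exceeds the coefficient orders. When it does not dominate, $p$ is bounded by those orders, so that contribution is absorbed into the counting functions of the $a_j$.

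Adding the proximity and counting bounds gives $nT(r,f)\le T(r,P(f))+S(r,f)$, which together with the upper bound proves the lemma. Alternatively, one may simply cite Mohon'ko's lemma \cite{10}, as the paper does.
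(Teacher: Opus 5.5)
Your proof is correct. The paper gives no argument for this lemma and only cites \cite{10}, so there is no internal proof to compare against. What you wrote is the standard direct proof of the polynomial case of the Valiron--Mohon'ko theorem, obtained by splitting $T=m+N$.

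Two checks confirm the details you left loose.

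First, your threshold $|f|\ge 1+2M$ with $M=\max_{j<n}|a_j/a_n|$ is enough. Since $\sum_{j<n}M|f|^j=M(|f|^n-1)/(|f|-1)\le(|f|^n-1)/2$, you get $|P(f)|\ge\frac12|a_n||f|^n$.

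Second, the pole bookkeeping can be made exact. Suppose $a_nf^n$ is not the strictly dominant term at a pole $z_0$ of $f$ of order $p$. Then $(n-j)p\le\mathrm{ord}_{z_0}a_n-\mathrm{ord}_{z_0}a_j$ for some $j<n$ with $a_j\not\equiv 0$. Hence $p$ is at most the zero order of $a_n$ plus the pole order of $a_j$ at $z_0$. This is why your display needs the weight $n$ in front of $N(r,1/a_n)$ and $\sum_j N(r,a_j)$. Cancellation can remove the pole of $P(f)$ completely: take $n=2$, $a_2=1$, $a_0=0$, $a_1=-1/z-c$, and $f=1/z+g$ with $g(0)=c$. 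So weight $1$ would not suffice pointwise.

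Compared with a bare citation, your route gives an explicit error term $O\bigl(\sum_j T(r,a_j)\bigr)+O(1)$ from the first main theorem alone. It needs no lemma on the logarithmic derivative and no exceptional set beyond whatever enters the smallness hypothesis. Consequently the lemma holds under either convention for $S(r,f)$. That is worth noting here, because the paper's own definition of $S(r,f)$ ($O(\log r)$ for finite order) is nonstandard.
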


\begin{lem}\label{l2.3}\cite{8} Let $f$ be a non-constant meromorphic function and $k\geq 2$ be an integer. If
\[N(r,f)+N(r,0;f)+N(r,0;f^{(k)})=S\left(r,\frac{f^{(1)}}{f}\right),\]
then $f(z)=\exp(az+b)$, where $a\not=0$ and $b$ are constants.
\end{lem}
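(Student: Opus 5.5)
Throughout, put $h=f^{(1)}/f$. The hypothesis is stated in terms of $S(r,h)$, so I would first show that $h$ is constant; then $f^{(1)}=af$ forces $f=\exp(az+b)$ with $a\neq 0$, since $f$ is non-constant. Assume from now on that $h$ is non-constant; the aim is a contradiction.

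\emph{Step 1: counting estimates for $h$.} Every pole of $h$ is simple and lies at a zero or pole of $f$, so $\overline{N}(r,h)\le \overline{N}(r,f)+\overline{N}(r,0;f)=S(r,h)$. By Lemma~\ref{l2.1} with $F=f$,
\[
P:=\frac{f^{(k)}}{f}=h^{k}+\frac{k(k-1)}{2}h^{k-2}h^{(1)}+a_{k}h^{k-3}h^{(2)}+\cdots .
\]
At each pole of $h$ the function $P$ has a pole of order at most $k$, so $N(r,P)\le k\overline{N}(r,h)=S(r,h)$. Every zero of $P$ is either a zero of $f^{(k)}$ or lies at a pole of $f$. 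Hence $N(r,0;P)\le N(r,0;f^{(k)})+kN(r,f)=S(r,h)$. So $P$ is a differential polynomial in $h$ with very few zeros and poles.

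\emph{Step 2: a linear relation for $h$.} Set $w=P^{(1)}/P$. By Theorem~1.A, $m(r,w)=O(\log(rT(r,P)))$ outside an exceptional set, and $T(r,P)=O(T(r,h))+S(r,h)$, so $m(r,w)=S(r,h)$. Also $N(r,w)=\overline{N}(r,P)+\overline{N}(r,0;P)=S(r,h)$. Thus $w$ is a small function of $h$.

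Now rewrite $P^{(1)}=wP$ as
\[
h^{k-1}\bigl(kh^{(1)}-wh\bigr)=Q(h),
\]
where $Q$ is a differential polynomial in $h$ with small coefficients and total degree at most $k-1$. The degree drops here because the term $\frac{k(k-1)}{2}h^{k-2}h^{(1)}$ and everything after it have degree at most $k-1$; this is where $k\ge 2$ enters, since it makes the second term genuinely present.

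Clunie's lemma then gives $m(r,kh^{(1)}-wh)=S(r,h)$. The poles of $u:=kh^{(1)}-wh$ come only from poles of $h$ and $w$, so $N(r,u)=S(r,h)$ as well. Hence $u$ is small and $h^{(1)}=(wh+u)/k$.

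\emph{Step 3: reaching the contradiction (main obstacle).} Substitute $h^{(1)}=(wh+u)/k$, and its derivatives, back into the expansion of $P$. This turns $P$ into an ordinary polynomial in $h$ of degree $k$ with small coefficients, $P=h^k+c_{k-1}h^{k-1}+\cdots+c_0$. By Lemma~\ref{l2.2}, $T(r,P)=kT(r,h)+S(r,h)$.

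Since $N(r,0;P)$ and $N(r,P)$ are both $S(r,h)$, the second main theorem for small functions should force $P$ to have only one distinct root in $h$, that is, $P=(h+c)^k$ with $c$ small. Comparing the $h^{k-1}$ coefficients, and using the relation of Step 2 again, I expect to obtain $(h+c)^{(1)}=\lambda (h+c)$ with $\lambda$ small.

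Finally I would test this last relation at a pole of $h$ that is not a pole of $c$ or $\lambda$. Such poles exist unless $N(r,h)$ is negligible, because a simple pole cannot satisfy a first-order linear equation with analytic coefficients. If instead $h$ has essentially no poles, then $f$ has essentially no zeros or poles, and $h+c=e^{\int\lambda}$ must be reconciled with the constraint on the zeros of $f^{(k)}$.

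I expect this final bookkeeping, showing that no non-constant $h$ survives, to be the hardest part. It needs a careful comparison of the lower-order coefficients coming from Lemma~\ref{l2.1}, namely $a_k$ and $b_k$.
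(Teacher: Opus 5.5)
The paper gives no proof of this lemma; it is quoted from the literature. Your argument therefore has to stand on its own, and as written it stops short of a contradiction.

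\textbf{What is sound.} Steps 1 and 2 are correct for transcendental $h$. The non-constant rational case needs one extra line. With $S(r,h)=o(T(r,h))=o(\log r)$, the hypothesis forces $f=e^{g}$ with $g$ a polynomial and $f^{(k)}$ zero-free, hence $\deg g=1$. This needs the standard meaning of $S(r,h)$: under the paper's convention $S(r,h)=O(\log r)$ for finite order, $f=ze^{z}$ satisfies the hypothesis.

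In Step 3 the appeal to the second main theorem is not justified, because the roots of $h^{k}+c_{k-1}h^{k-1}+\cdots+c_{0}$ need not be meromorphic. It is also unnecessary:
\begin{itemize}
\item differentiate $P=\sum_{i\le k}c_ih^i$, insert $h^{(1)}=(wh+u)/k$, and use $P^{(1)}=wP$;
\item by Lemma~\ref{l2.2}, every coefficient of the resulting polynomial in $h$ vanishes;
\item the $h^{k-1}$-coefficient gives $u=\frac{w}{k}c_{k-1}-c_{k-1}^{(1)}$, so $v:=h+c_{k-1}/k$ satisfies $v^{(1)}=\frac{w}{k}v$.
\end{itemize}
So the relation you expected is correct.

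\textbf{The gap.} The final step is missing. Your pole test cannot produce a contradiction. Step 1 already gives $\overline{N}(r,h)=S(r,h)$, so ``$h$ has essentially no poles'' is the only case. Moreover, at a pole of $f$ the function $P$ has a pole of order $k$, so $\lambda$ has a pole there and the test is not even available. You leave that case open, and $v^{(1)}=\lambda v$ with $\lambda$ small is not contradictory by itself ($v=e^{z^{2}}$, $\lambda=2z$).

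\textbf{The missing idea.} The functions $c$ and $\lambda$ are not arbitrary small functions: both are determined by $w=P^{(1)}/P$.
\begin{itemize}
\item $kv^{(1)}/v=P^{(1)}/P$ integrates to $v^{k}=P$. The constant is $1$, by comparing leading coefficients in $h$. Also $\phi:=v^{(1)}/v=w/k$ is small by Step 2.
\item $c_{k-1}$ comes only from $\frac{k(k-1)}{2}h^{k-2}h^{(1)}$. Hence $c_{k-1}=\frac{k-1}{2}w$ and $h=v-\frac{k-1}{2}\phi$.
\end{itemize}
Substitute this into Lemma~\ref{l2.1}; this is where $a_k$ and $b_k$ are needed. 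The $v^{k-1}$-terms cancel, and
\[
\frac{f^{(k)}}{f}=v^{k}+\frac{(k+1)k(k-1)}{24}\bigl(\phi^{2}-2\phi^{(1)}\bigr)v^{k-2}+(\text{terms of degree at most } k-3 \text{ in } v).
\]
Since $f^{(k)}/f=v^{k}$ and $v$ is transcendental, Lemma~\ref{l2.2} gives $\phi^{2}=2\phi^{(1)}$. So $\phi\equiv 0$ or $\phi=-2/(z+C)$. Then $v$ is constant or $A(z+C)^{-2}$, and $h$ is rational, a contradiction.

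This is also where $k\ge 2$ is really used, not in the Clunie step. The Clunie step holds trivially for $k=1$, where the lemma is false ($f=\exp(e^{z})$).
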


Using Theorem 1.A, the following lemma can be obtained from the proof of Theorem 1.24 \cite{12}.
\begin{lem}\label{l2.5} \cite[Theorem 1.24]{12} Suppose that $f$ is a non-constant meromorphic function and let $k$ is a positive integer. Then $N(r,0;f^{(k)})\leq N(r,0;f)+k\ol N(r,f)+O(\log T(r,f)+\log r)$
as $r\rightarrow \infty$, outside of a possible exceptional set of finite linear measure.
\end{lem}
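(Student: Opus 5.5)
The plan is to prove Lemma~\ref{l2.5} by comparing $f^{(k)}$ with $f$ through the logarithmic derivative quotient
\[
h=\frac{f^{(k)}}{f},
\]
and then applying the first main theorem to $h$. For the small-proximity term I will use Theorem~1.A. (If $f^{(k)}\equiv 0$, then $f$ is a polynomial of degree less than $k$ and the left side is $0$, so I assume $f^{(k)}\not\equiv0$.)

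\textbf{Step 1 (Jensen/first main theorem).} Since $T(r,h)=T(r,1/h)+O(1)$, we get
\[
N(r,0;h)\le N(r,h)+m(r,h)-m(r,0;h)+O(1)\le N(r,h)+m(r,h)+O(1).
\]
By Theorem~1.A, $m(r,h)=O(\log(rT(r,f)))$ outside an exceptional set of finite linear measure, which is $O(\log T(r,f)+\log r)$. In the finite-order case the bound is even $O(\log r)$.

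\textbf{Step 2 (local divisor bookkeeping).} At every point $z_0$ we have $\operatorname{ord}_{z_0}f^{(k)}=\operatorname{ord}_{z_0}h+\operatorname{ord}_{z_0}f$, where $\operatorname{ord}$ is positive for zeros and negative for poles. I check the three types of points.
\begin{itemize}
\item At a pole of $f$ of order $q$, $f^{(k)}$ has a pole of order $q+k$, so $h$ has a pole of order exactly $k$. Summing over poles gives the contribution $k\,\overline N(r,f)$ to $N(r,h)$.
\item At a zero of $f$ of multiplicity $p$, suppose $f^{(k)}$ has a zero of order $s\ge0$. Then $\operatorname{ord}h=s-p$. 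The zero of $f^{(k)}$ contributes $s$ to $N(r,0;f^{(k)})$, and $s=p+(s-p)$. So this contribution is at most $p+\max(s-p,0)$, i.e.\ at most $p$ plus the zero order of $h$ there.
\item At any other point, a zero of $f^{(k)}$ is a zero of $h$ of the same order.
\end{itemize}
Summing these local inequalities gives
\[
N(r,0;f^{(k)})\le N(r,0;f)+N(r,0;h\mid f\ne\infty).
\]
Poles of $f$ never produce zeros of $h$, so the last term is just $N(r,0;h)$. On the other hand,
\[
N(r,h)\le k\,\overline N(r,f)+N(r,h\mid f=0).
\]

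\textbf{Step 3 (absorbing the pole part of $h$ at zeros of $f$).} This is the only delicate point. Using Step 1 naively would add $N(r,h\mid f=0)$ on top of $N(r,0;f)$, which double counts. To avoid this, I refine the count at zeros of $f$: there, $\operatorname{ord}f^{(k)}=p+\operatorname{ord}h$. Hence the contribution of $z_0$ to $N(r,0;f^{(k)})$ equals $p$ plus the zero order of $h$ minus the pole order of $h$ at $z_0$ (both sides being nonnegative). Therefore
\[
N(r,0;f^{(k)})\le N(r,0;f)+N(r,0;h)-N(r,h\mid f=0).
\]
Combining this with Step 1,
\[
N(r,0;h)-N(r,h\mid f=0)\le k\,\overline N(r,f)+m(r,h)+O(1),
\]
and therefore
\[
N(r,0;f^{(k)})\le N(r,0;f)+k\,\overline N(r,f)+O(\log T(r,f)+\log r)
\]
outside a set of finite linear measure, as claimed.
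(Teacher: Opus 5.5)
Your proof is correct and follows essentially the argument the paper defers to, namely the proof of Theorem~1.24 in Yang--Yi with Theorem~1.A supplying the error term. That proof applies Jensen's formula to $h=f^{(k)}/f$: it uses $N(r,0;f^{(k)})-N(r,f^{(k)})=N(r,0;f)-N(r,f)+N(r,0;h)-N(r,h)$ together with $N(r,f^{(k)})-N(r,f)=k\ol N(r,f)$ and $N(r,0;h)-N(r,h)\le m(r,h)+O(1)$, and then bounds $m(r,h)$ by Theorem~1.A. Your Steps~1 and~3 carry out exactly this identity point by point, and Step~2 is superseded by Step~3. One minor point: when $f^{(k)}\equiv 0$ the left-hand side is undefined rather than $0$, so that case is excluded by the statement rather than trivially true.
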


Again using Theorem 1.A, the following lemma can be obtained from the proof of Lemma 2.4 \cite{15}.
\begin{lem}\label{l2.6}\cite[Lemma 2.4]{15} Let $f$ be a non-constant meromorphic function and let $k$ and $p$ be positive integers. Then
\beas N_{p}\left(r,0;f^{(k)}\right) \leq T\left(r,f^{(k)}\right)-T(r,f)+ N_{p+k}(r,0;f) + O(\log T(r,f)+\log r)\eeas
and
\beas N_{p}\left(r,0;f^{(k)}\right) \leq k\ol N(r,\infty ;f) + N_{p+k}(r,0;f) +O(\log T(r,f)+\log r)\eeas
as $r\rightarrow \infty$, outside of a possible exceptional set of finite linear measure.
\end{lem}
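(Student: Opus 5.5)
The plan is to follow the classical Nevanlinna argument behind Lemma 2.4 of \cite{15}, using Theorem~1.A to control the error terms. Throughout we assume $f^{(k)}\not\equiv 0$; otherwise $f$ is a polynomial of degree less than $k$ and the statement is vacuous. We first prove the untruncated inequality, then pass to the truncated counting functions by a pointwise comparison of multiplicities, and finally bound $T(r,f^{(k)})-T(r,f)$ to get the second inequality.

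\emph{Step 1 (untruncated bound).} Since $\frac{1}{f}=\frac{f^{(k)}}{f}\cdot\frac{1}{f^{(k)}}$, we have
\[
m\!\left(r,\tfrac{1}{f}\right)\le m\!\left(r,\tfrac{1}{f^{(k)}}\right)+m\!\left(r,\tfrac{f^{(k)}}{f}\right).
\]
Writing $m(r,1/h)=T(r,h)-N(r,0;h)+O(1)$ by the first main theorem, for $h=f$ and $h=f^{(k)}$, this gives
\[
N(r,0;f^{(k)})\le T(r,f^{(k)})-T(r,f)+N(r,0;f)+m\!\left(r,\tfrac{f^{(k)}}{f}\right)+O(1).
\]
By Theorem~1.A, $m(r,f^{(k)}/f)$ is $O(\log r)$ when $\rho(f)<\infty$, and $O(\log(rT(r,f)))$ outside a set of finite linear measure when $\rho(f)=\infty$. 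In both cases it is $O(\log T(r,f)+\log r)$ outside such a set.

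\emph{Step 2 (truncation).} It remains to prove
\[
N(r,0;f^{(k)})-N_p(r,0;f^{(k)})\ \ge\ N(r,0;f)-N_{p+k}(r,0;f),
\]
which we check pointwise.
\begin{itemize}
\item At a zero of $f$ of multiplicity $m\ge k+1$, $f^{(k)}$ has a zero of multiplicity exactly $m-k$. The left side then contributes $\max\{m-k-p,0\}$ and the right side contributes $\max\{m-p-k,0\}$, so the two contributions are equal.
\item At a zero of $f$ of multiplicity $m\le k$, the right side contributes $0$ while the left side contributes something nonnegative.
\item Zeros of $f^{(k)}$ that are not zeros of $f$ contribute only nonnegatively to the left side.
\end{itemize}
Subtracting this inequality from the bound in Step 1 gives the first assertion.

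\emph{Step 3 (second inequality).} Poles of $f$ of multiplicity $q$ become poles of $f^{(k)}$ of multiplicity $q+k$, so $N(r,f^{(k)})=N(r,f)+k\overline{N}(r,f)$. Also $m(r,f^{(k)})\le m(r,f)+m(r,f^{(k)}/f)$. Hence
\[
T(r,f^{(k)})\le T(r,f)+k\overline{N}(r,\infty;f)+O(\log T(r,f)+\log r),
\]
again by Theorem~1.A. Substituting this into the first inequality yields the second.

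The only real care needed is the multiplicity bookkeeping in Step 2 and handling the finite- and infinite-order error terms uniformly. The exceptional set is the one from Theorem~1.A, so it has finite linear measure.
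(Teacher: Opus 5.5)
Your proof is correct and follows essentially the argument the paper relies on. The paper gives no proof of its own; it defers to the proof of Lemma~2.4 in \cite{15}, which proceeds as you do: the first main theorem plus the logarithmic derivative lemma bound $N(r,0;f^{(k)})$, a pointwise multiplicity comparison passes to $N_p$ and $N_{p+k}$, and $T(r,f^{(k)})\le T(r,f)+k\overline{N}(r,f)+m(r,f^{(k)}/f)$ gives the second inequality, with Theorem~1.A supplying the explicit $O(\log T(r,f)+\log r)$ error term outside a set of finite linear measure.
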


\begin{lem}\label{l2.6a} \cite{12a} Let $f$ be a non-constant meromorphic function and let $a_{1}$ and $a_{2}$ be two distinct small functions of $f$. Then $T(r,f)\leq \ol N(r,f)+\ol N(r,a_1;f)+\ol N(r,a_2;f)+S(r,f)$.
\end{lem}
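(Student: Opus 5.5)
The statement is the second main theorem of Nevanlinna theory with the two finite targets replaced by small functions; the infinite target $\infty$ is kept. I would reduce it to the classical second main theorem for the constants $0,1,\infty$ by a Möbius change of the dependent variable whose coefficients are small functions. Set
\[
h=\frac{f-a_{1}}{a_{2}-a_{1}},
\]
which is non-constant because $a_{1}\not\equiv a_{2}$ and $f$ is not itself a small function. By the first main theorem and the subadditivity of $T$, $T(r,h)\leq T(r,f)+T(r,a_{1})+T(r,a_{2})+O(1)$. Since $f=a_{1}+(a_{2}-a_{1})h$, we also have $T(r,f)\leq T(r,h)+T(r,a_{1})+T(r,a_{2})+O(1)$. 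Lemma \ref{l2.2} with $n=1$ would also give these bounds. Hence
\[
T(r,h)=T(r,f)+S(r,f).
\]
In particular $\log T(r,h)=\log T(r,f)+O(1)$ outside the exceptional set, and $\rho(h)=\rho(f)$. So every error term $S(r,h)$, in the paper's sense, is an $S(r,f)$. This follows from Theorem 1.A, which in turn underlies the second main theorem.

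Next I would apply the classical second main theorem to $h$ with the three values $0,1,\infty$:
\[
T(r,h)\leq \overline{N}(r,h)+\overline{N}(r,0;h)+\overline{N}(r,1;h)+S(r,h).
\]
The remaining work is to compare each reduced counting function of $h$ with the corresponding one for $f$. The points to handle are those where the relations $h=\infty\Leftrightarrow f=\infty$, $h=0\Leftrightarrow f=a_{1}$ and $h=1\Leftrightarrow f=a_{2}$ may fail. These all lie in the set of zeros and poles of $a_{1}$, $a_{2}$ and $a_{2}-a_{1}$.

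Concretely:
\begin{itemize}
\item A pole of $h$ that is not a pole of $f$ must be a pole of $a_{1}$ or a zero of $a_{2}-a_{1}$.
\item A zero of $h$ that is not a zero of $f-a_{1}$ must be a pole of $a_{2}-a_{1}$.
\item A $1$-point of $h$ is a zero of $(f-a_{2})/(a_{2}-a_{1})$, so if it is not a zero of $f-a_{2}$ it is a pole of $a_{2}-a_{1}$.
\end{itemize}
Each such exceptional set is counted (ignoring multiplicity) by at most
\[
\overline{N}(r,a_{1})+\overline{N}(r,a_{2})+\overline{N}\Bigl(r,\frac{1}{a_{2}-a_{1}}\Bigr)\leq 3T(r,a_{1})+3T(r,a_{2})+O(1)=S(r,f).
\]
Therefore $\overline{N}(r,h)\leq\overline{N}(r,f)+S(r,f)$, $\overline{N}(r,0;h)\leq\overline{N}(r,a_{1};f)+S(r,f)$, and $\overline{N}(r,1;h)\leq\overline{N}(r,a_{2};f)+S(r,f)$.

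Substituting these into the second main theorem for $h$ and using $T(r,f)=T(r,h)+S(r,f)$ yields the claim. The only step needing care, which I regard as the main (mild) obstacle, is the bookkeeping of the error terms. Because the paper's $S(r,f)$ is $O(\log r)$ for finite order and $O(\log(rT(r,f)))$ off a set of finite measure otherwise, I must check two things. First, the defining condition $T(r,a_{i})=S(r,f)$ is compatible with the error term of the classical theorem. Second, the exceptional sets arising for $h$ and for $f$ can be merged into a single set of finite linear measure. Both follow from $T(r,h)\asymp T(r,f)$ together with Theorem 1.A.
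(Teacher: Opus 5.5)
Your argument is correct, and it differs from the paper only in that the paper gives no argument: it imports the lemma from \cite{12a} without proof.

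\textbf{What your route does.} You derive the lemma directly from the classical second main theorem via the substitution $h=(f-a_1)/(a_2-a_1)$, which sends the three targets $a_1,a_2,\infty$ to $0,1,\infty$. Your case analysis is accurate. Every point where $h=\infty$, $h=0$ or $h=1$ fails to correspond to $f=\infty$, $f=a_1$ or $f=a_2$ is a zero or pole of $a_1$, $a_2$ or $a_2-a_1$, so all such points contribute only $S(r,f)$. (Your constant $3$ could be $2$, which is immaterial.)

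\textbf{Comparison with the citation.} This elementary reduction works precisely because only three targets occur, one of them $\infty$. The general second main theorem with $q>3$ small-function targets is a much deeper result and is not needed here. The citation is shorter, while your route is self-contained. It also makes the error bookkeeping explicit under the paper's nonstandard convention for $S(r,f)$. You handle that convention correctly through $T(r,h)=T(r,f)+S(r,f)$, $\rho(h)=\rho(f)$, and merging finitely many exceptional sets of finite linear measure.

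\textbf{One small repair.} You justify that $h$ is non-constant by saying that $f$ is not itself a small function. Under the paper's convention this can fail. For a non-constant rational $f$ one has $T(r,f)=O(\log r)=S(r,f)$. For example, $f=z$, $a_1=0$, $a_2=2z$ gives $h\equiv 1/2$, and then the classical second main theorem cannot be applied to $h$. In that situation the asserted inequality is trivial, so dispose of the case $T(r,f)=S(r,f)$ at the outset. After that, non-constancy of $h$ follows exactly as you say.
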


\begin{lem}\label{l2.7} \cite{8} Let $f$ be a meromorphic function of infinite order. Then there exists a sequence $\{z_n\}$, $z_{n}\rightarrow \infty$ such that $f^{\#}(z_{n})>|z_{n}|^{N}$ holds for every $N>0$, if $n$ is sufficiently large.
\end{lem}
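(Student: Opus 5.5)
The plan is to argue by contradiction through the Ahlfors--Shimizu form of the characteristic, reducing the statement to the single claim
\[
\limsup_{|z|\to\infty}\frac{\log f^{\#}(z)}{\log|z|}=+\infty .
\]

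First, this claim is enough. Granting it, for each $n\in\mathbb{N}$ we choose a point $z_n$ with $|z_n|>\max\{n,|z_{n-1}|,2\}$ and $f^{\#}(z_n)\geq |z_n|^{n}$. Then $z_n\to\infty$. Given any $N>0$, every $n>N$ satisfies $f^{\#}(z_n)\geq|z_n|^{n}>|z_n|^{N}$, because $|z_n|>1$. So one sequence works simultaneously for every $N$, which is exactly the required statement.

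To prove the claim, suppose instead that the $\limsup$ is finite. Then there are an integer $N_0$ and an $r_0>1$ such that
\[
f^{\#}(z)\leq |z|^{N_0}\quad\text{for } |z|\geq r_0 .
\]
Since $f^{\#}$ is continuous and bounded on the compact disc $|z|\leq r_0$ (it is finite even at poles), we get $f^{\#}(z)\leq C(1+|z|)^{N_0}$ on all of $\mathbb{C}$. We now use the Ahlfors--Shimizu characteristic
\[
T_0(r,f)=\int_0^r\frac{A(t)}{t}\,dt,\qquad A(t)=\frac1\pi\iint_{|z|<t}\bigl(f^{\#}(z)\bigr)^2\,dx\,dy .
\]
The pointwise bound gives $A(t)\leq C' t^{2N_0+2}$ for $t\geq1$, and hence $T_0(r,f)=O(r^{2N_0+2})$. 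Because $T(r,f)=T_0(r,f)+O(1)$, this forces $\rho(f)\leq 2N_0+2<\infty$, contradicting the assumption that $f$ has infinite order. This proves the claim and with it the lemma.

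The only step needing care is the near-origin term in $T_0$. If $f(0)=\infty$ or $f^{\#}$ behaves badly at $0$, we use the standard normalization of $T_0$ over $1\leq t\leq r$; it changes $T_0$ only by a bounded amount and leaves the order estimate intact. Everything else is routine, so I expect no real obstacle beyond invoking the identity $T=T_0+O(1)$.
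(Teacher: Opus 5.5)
Your argument is correct: the paper gives no proof of this lemma (it is only quoted from \cite{8}), and your contradiction argument via the Ahlfors--Shimizu characteristic, $f^{\#}(z)\le C(1+|z|)^{N_0}\Rightarrow T(r,f)=O(r^{2N_0+2})$, is the standard proof of this fact in the literature. Your closing caveat is unnecessary: $f^{\#}$ is continuous on all of $\mathbb{C}$, so $A(t)=O(t^{2})$ near $0$ and the integral always converges, and the case $f(0)=\infty$ is handled by passing to $1/f$, which has the same spherical derivative and satisfies $T(r,1/f)=T(r,f)+O(1)$.
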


\begin{lem}\label{l2.8}\cite[Lemmas 1 and 2]{2} Let $f$ be a meromorphic function. If $f$ has bounded spherical derivative on $\mathbb{C}$, then $\rho(f)\leq 2$. If in addition, $f$ has finitely many poles then $\rho(f)\leq 1$.
\end{lem}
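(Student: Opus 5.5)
Since the statement is quoted from Clunie--Hayman \cite{2}, the plan below reproduces the standard argument. Part 1 is a routine estimate with the Ahlfors--Shimizu characteristic. For part 2 I am committing to an approach whose key step I have not checked and which may need to be replaced.

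\emph{First assertion.} I would use the Ahlfors--Shimizu form of the characteristic,
\[
T_{0}(r,f)=\int_{0}^{r}\frac{A(t)}{t}\,dt,\qquad A(t)=\frac{1}{\pi}\iint_{|z|<t}\bigl(f^{\#}(z)\bigr)^{2}\,dx\,dy .
\]
This differs from $T(r,f)$ by a bounded term. If $f^{\#}\le M$ on $\mathbb{C}$, then $A(t)\le M^{2}t^{2}$. Hence $T(r,f)\le \tfrac12 M^{2}r^{2}+O(1)$, and therefore $\rho(f)\le 2$.

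\emph{Second assertion.} Suppose $f$ has only finitely many poles. Then $N(r,f)=O(\log r)$, so $T(r,f)=m(r,f)+O(\log r)\le \log^{+}M(r,f)+O(\log r)$ for $r$ beyond the last pole. It is therefore enough to show $\log M(r,f)=O(r)$ on $|z|\ge R_{0}$.

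The tool is normality. By Marty's theorem (Theorem 1.D), the translates $f_{w}(z)=f(w+z)$, $|w|\ge R_{0}+1$, form a normal family of holomorphic functions on the unit disc. From this I would extract the following uniform statement:
\begin{quote}
there is $K>1$ such that $|f(w)|\ge K$ implies $f$ has no zeros in $D(w,\tfrac12)$.
\end{quote}
Otherwise a sequence $f_{w_{j}}$ with $f_{w_{j}}(0)\to\infty$ would have a subsequence converging to $\infty$ uniformly on $|z|\le\tfrac12$, which is impossible if each $f_{w_j}$ has a zero there. On such a disc $\log|f|$ is a positive harmonic function, so Harnack's inequality controls how fast it can grow across the disc.

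The goal is then to propagate this control from a fixed circle out to $|z|=r$. I would move along a chain of about $r/\varepsilon$ overlapping discs of small radius $\varepsilon$. On a disc of radius $\varepsilon$ the Harnack factor is close to $1$, and I would aim to show that the product of these factors along the chain contributes only an additive $O(1)$ per step to $\log M$, rather than a multiplicative factor.

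\textbf{Main obstacle.} The step I expect to be hardest is exactly this propagation. Applied naively, Harnack's inequality gives $\log M(r)\le e^{Cr}$, which yields no finite-order bound at all. To get $\log M(r)=O(r)$ I would try to replace Harnack by a gradient estimate on the zero-free discs: for a positive harmonic $u=\log|f|$ on $D(w,\tfrac12)$, bound $|\nabla u|$ by $C\,u$ together with the finer information $|(1/f)'|\le 2M$, which holds wherever $|f|\ge1$, and integrate along rays. If this cannot be closed, I would instead follow Clunie--Hayman's original argument \cite{2}. That argument uses the omitted value $\infty$ to compare $f$ with an entire function of exponential type.
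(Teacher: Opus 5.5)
The paper gives no proof of this lemma; it simply quotes Clunie--Hayman \cite{2}. Your first assertion is fine and complete: $f^{\#}\le M$ gives $A(t)\le M^{2}t^{2}$, so $T_0(r,f)\le \frac12 M^{2}r^{2}$, and $T=T_0+O(1)$.

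\textbf{The second assertion is not proved.} Several pieces are correct:
\begin{itemize}
\item the reduction to $\log M(r,f)=O(r)$;
\item the normality argument via Marty's theorem.
\end{itemize}
One small slip: zero-freeness of $f$ on $D(w,\frac12)$ does not make $\log|f|$ \emph{positive} there. You need $|f|>1$ on that disc, which the same compactness argument supplies.

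The real gap is the propagation step, which you leave open, and neither proposed fix can close it.
\begin{itemize}
\item A bound $|\nabla u|\le Cu$ integrated along a ray only reproduces $u(z)\le u_0e^{C|z|}$.
\item Integrating $|(1/f)'|\le 2M$ along a ray only gives $|1/f(z)|\ge |1/f(z_0)|-2M|z-z_0|$. This is informative only over steps of length about $|1/f(z_0)|/(2M)$, and these steps shrink as $|f|$ grows. So it yields local control, not a global linear bound.
\end{itemize}
Deferring to ``Clunie--Hayman's original argument'' is not itself an argument.

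\textbf{The missing idea.} Send information \emph{inward} from the level set $\{|f|=1\}$ using one maximal disc, rather than outward along a chain; this is a Hopf-lemma argument.

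Setup: place all poles in $D(0,R_0)$ and set $E=\{|f|\le 1\}\cup\overline{D}(0,R_0)$. For $|z|>R_0$ with $|f(z)|>1$, let $d=\mathrm{dist}(z,E)$, so $d\le |z|$. Then $u=\log|f|$ is positive harmonic on $D(z,d)$. Choose $\zeta\in\partial D(z,d)\cap E$.

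Case $|f(\zeta)|=1$. For $x\in[z,\zeta]$ with $|x-z|=s$, Harnack gives $u(x)\ge\frac{d-s}{d+s}u(z)$. Hence $u(x)/|x-\zeta|\ge u(z)/(2d)$, so the derivative of $u$ at $\zeta$ in the direction of $z$ is at least $u(z)/(2d)$. On the other hand, $|\nabla u(\zeta)|=|f'(\zeta)|=2f^{\#}(\zeta)\le 2M$. Therefore $u(z)\le 4Md$.

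Case $|\zeta|=R_0$ with $d>1$ (for $d\le 1$ the bound is trivial). Let $C_0=\max_{R_0\le|w|\le R_0+1}\log^{+}|f(w)|$, which is finite. At the point of $[z,\zeta]$ at distance $1$ from $\zeta$ we have $u\le C_0$. Harnack then gives $u(z)\le (2d-1)C_0$.

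In every case $\log^{+}|f(z)|=O(|z|)$, so $\log M(r,f)=O(r)$, and your reduction yields $\rho(f)\le 1$.
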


\begin{proof}[{\bf Proof of Theorem \ref{t2.1}}]
Suppose
\be\label{al.1} (f^{n})^{(k)}(g^{n})^{(k)}\equiv \alpha^{2}.\ee

Since $\rho(\alpha)<\rho(g)$, from (\ref{al.1}) we get
\beas \rho(f)=\rho(f^{n})=\rho((f^{n})^{(k)})=\rho\left(\frac{(g^{n})^{(k)}}{\alpha^2}\right)=\rho((g^{n})^{(k)})=\rho(g^{n})=\rho(g),\eeas
i.e., $\rho(f)=\rho(g)$. Since $n>k$, using Lemma \ref{l2.6} we get $T(r,f)=O(T(r,(f^n)^{(k)}))$. Also we have $T(r,(f^n)^{(k)})=O(T(r,f))$. Therefore $S(r,f)=S(r,(f^n)^{(k)}))$. Similarly we get $S(r,g)=S(r,(g^n)^{(k)}))$. Again from (\ref{al.1}), we have $S(r,(f^n)^{(k)}))=S(r,(g^n)^{(k)}))$. Therefore $S(r,f)=S(r,g)$.

Note that if $z_{0}$ is a zero of $f$ of multiplicity $p_0$, then $z_{0}$ is also a zero of $(f^{n})^{(k)}$ of multiplicity $np_0-k$. Since $n>k$, then by a routine calculation we can conclude that $z_0$ is a zero of $\alpha$ of multiplicity at least $p_0$. Therefore $f=0\mapsto \alpha=0$. Similarly we can prove that $g=0\mapsto \alpha=0$.

Again if $z_{1}$ is a pole of $f$ of multiplicity $p_1$, then $z_{1}$ is also a pole of $(f^{n})^{(k)}$ of multiplicity $np_1+k$. Since $f$ and $g$ share $\infty$ IM, we can conclude that $z_1$ is a pole of $\alpha$ of multiplicity at least $p_1$. Therefore $f=\infty\mapsto \alpha=\infty$. Similarly we can prove that $g=\infty\mapsto \alpha=\infty$.

Now we consider following two cases.\par
{\bf Case 1.} Let $\rho(\alpha)>0$. It is given that $\rho_1(\alpha)<\rho(\alpha)$ and $\rho_1(\frac{1}{\alpha})<\rho(\alpha)$.
Then by Theorem 1.C we get
\bea\label{al.1.1}\mu(\alpha)=\rho(\alpha)\;\; \text{and}\;\;\rho(\alpha)\in\mathbb{N}.\eea

Set $\alpha=\frac{\pi_1}{\pi_2}\exp(\alpha_2)$, where $\pi_1$ and $\pi_2$ are the canonical products formed with the zeros and poles of $\alpha$ respectively and $\alpha_{2}$ is a polynomial such that $\rho(\alpha)=\deg(\alpha_2)$. Let $\alpha_1=\frac{\pi_1}{\pi_2}$. Then $\alpha=\alpha_1\exp(\alpha_2)$. Noted that $\rho(\alpha_1)<\rho(\alpha)$ and so from (\ref{al.1.1}), we get $\rho(\alpha_1)<\mu(\alpha)$. Then by Theorem 1.B, we get
\bea\label{al.1.0a} T(r,\alpha_1)=S(r,\alpha).\eea

Now $f=0\mapsto \alpha=0$ implies that $N(r,0;f)\leq N(r,0;\alpha)$ and so
\[\rho_1(f)=\varlimsup\limits_{r\to\infty}\frac{\log^+ N(r,0;f)}{\log r}\leq \varlimsup\limits_{r\to\infty}\frac{\log^+ N(r,0;\alpha)}{\log r}=\rho_1(\alpha)<\rho(\alpha),\]
i.e., $\rho_1(f)<\rho(\alpha)$. Similarly we have $\rho_1(g)<\rho(\alpha)$, $\rho_1(\frac{1}{f})<\rho(\alpha)$ and  $\rho_1(\frac{1}{g})<\rho(\alpha)$. Since $\rho(\alpha)<\rho(f)=\rho(g)$, it follows that $0$ and $\infty$ are the Borel exceptional values of both $f$ and $g$. Consequently by Theorem 1.C, we have
$\mu(f)=\rho(f)$, $\mu(g)=\rho(g)$ and $\rho(f),\;\rho(g)$ are positive integers or $\infty$.

Set $f=\frac{R_{11}}{R_{12}}\exp(\delta_1)$, where $R_{11}$ and $R_{12}$ are the canonical products formed with the zeros and poles of $f$ respectively and $\delta_{1}$ is a non-constant entire function. Again let $g=\frac{R_{21}}{R_{22}}\exp(\delta_2)$, where $R_{21}$ and $R_{22}$ are the canonical products formed with the zeros and poles of $g$ respectively and $\delta_{2}$ is a non-constant entire function. Now by Ash \cite[Theorem 4.3.6]{1}, we get $\rho(R_{11})=\rho_1(f)<\rho(\alpha)$, $\rho(R_{12})=\rho_1(\frac{1}{f})<\rho(\alpha)$, $\rho(R_{21})=\rho_1(g)<\rho(\alpha)$ and $\rho(R_{22})=\rho_1(\frac{1}{g})<\rho(\alpha)$.
If we take $R_1=\frac{R_{11}}{R_{12}}$ and $R_2=\frac{R_{21}}{R_{22}}$, then we have
\bea\label{al.1.1a}f=R_1\exp(\delta_1)\;\;\text{and}\;\;g=R_2\exp(\delta_2).\eea

It is clear that $R_i=0\mapsto \alpha=0$ and $R_i=\infty\mapsto \alpha=\infty$ for $i=1,2$. Also we have $\rho(R_1)<\rho(\alpha)$ and $\rho(R_2)<\rho(\alpha)$. Now using Theorem 1.B, we deduce from (\ref{al.1.1}) that $T(r,R_1)=S(r,\alpha)$ and $T(r,R_2)=S(r,\alpha)$. Since $T(r,\alpha)=S(r,f)=S(r,g)$, we get $T(r,R_1)=S(r,f)$ and $T(r,R_2)=S(r,g)$.
Therefore using Lemma \ref{l2.2} to (\ref{al.1.1a}), we get
$T(r,f)=T(r,\exp(\delta_1))+S(r,f)$ and $T(r,g)=T(r,\exp(\delta_2))+S(r,g)$.
Clearly $\rho\big(\frac{(R_1^n)^{(1)}}{R_1^n}\big)< \rho(\alpha)$, $\rho\big(\frac{(R_2^n)^{(1)}}{R_2^n}\big)< \rho(\alpha)$ and $\rho\big((R_1R_2)^n\big)<\rho(\alpha)$. Consequently
\bea\label{al.1.2a}\rho\Big(\frac{\alpha^2}{(R_1R_2)^n}\Big)=\rho(\alpha).\eea

Note that $\frac{(f^n)^{(1)}}{f^n}=\frac{(R_1^n)^{(1)}}{R_1^n}+n\delta_1^{(1)}$ and $\frac{(g^n)^{(1)}}{g^n}=\frac{(R_2^n)^{(1)}}{R_2^n}+n\delta_2^{(1)}$.
Now using Lemma \ref{l2.1}, we conclude from (\ref{al.1}) that
\bea\label{al.1.3} (f^{n})^{(k)}=\left(\left(\frac{(R_1^n)^{(1)}}{R_1^n}+n\delta_1^{(1)}\right)^{k}+P^{*}_{k-1}\left(\frac{(R_1^n)^{(1)}}{R_1^n}+n\delta_1^{(1)}\right)\right)R_{1}^{n}\exp(n\delta_1)\eea
and
\bea\label{al.1.4} (g^{n})^{(k)}=\left(\left(\frac{(R_2^n)^{(1)}}{R_2^n}+n\delta_2^{(1)}\right)^{k}+P^{*}_{k-1}\left(\frac{(R_2^n)^{(1)}}{R_2^n}+n\delta_2^{(1)}\right)\right)R_{2}^{n}\exp(n\delta_2),\eea
where $P^{*}_{k-1}\Big(\frac{(R_1^n)^{(1)}}{R_1^n}+n\delta_1^{(1)}\Big)$ is a differential polynomial of degree at most $k-1$ in $\frac{(R_1^n)^{(1)}}{R_1^n}+n\delta_1^{(1)}$.

Therefore from (\ref{al.1}), (\ref{al.1.3}) and (\ref{al.1.4}), we have
\bea\label{al.1.5} \tilde P\tilde Q \left(R_{1}R_2\right)^{n}\exp(n(\delta_1+\delta_2))\equiv \alpha^{2},\eea
where
\bea\label{al.1.5b}\tilde P=\left(\frac{(R_1^n)^{(1)}}{R_1^n}+n\delta_1^{(1)}\right)^{k}+P^{*}_{k-1}\left(\frac{(R_1^n)^{(1)}}{R_1^n}+n\delta_1^{(1)}\right)\eea
and
\bea\label{al.1.5bb}\tilde Q=\left(\frac{(R_2^n)^{(1)}}{R_2^n}+n\delta_2^{(1)}\right)^{k}+P^{*}_{k-1}\left(\frac{(R_2^n)^{(1)}}{R_2^n}+n\delta_2^{(1)}\right).\eea

Now we want to prove that $(i)$ $\rho(\delta_1)<+\infty$, $\rho(\delta_2)<+\infty$ and $(ii)$ $\delta_1+\delta_2$ is a polynomial.

If not, suppose $\delta=\delta_1+\delta_2$ is a transcendental entire function. Certainly $\rho(\exp(-\delta))=+\infty$. Since $\rho(R_1)<+\infty$, $\rho(R_2)<+\infty$ and $\rho(\alpha)<+\infty$, we conclude that $R_1$, $R_2$ and $\alpha$ are small functions of $\exp(-\delta)$.
We now consider following two cases.\par
{\bf Case A.} Let $k=1$. Then from (\ref{al.1}), we have
\bea\label{alr.1} n^2 f^{n-1}f^{(1)}g^{n-1}g^{(1)}=\alpha^2.\eea

Let $ h=\frac{1}{fg}$.
Clearly from (\ref{al.1.1a}) we get $h=\frac{1}{R_1R_2}\exp(-\delta)$, where $\delta=\delta_1+\delta_2$. Note that
$T(r,h)=T\left(r,\frac{1}{R_1R_2}\exp(-\delta)\right)=T(r,\exp(-\delta))+S(r,\exp(-\delta))$. Consequently $S(r,h)=S(r,\exp(-\delta))$ and so $N(r,h)=S(r,h)$. Now from (\ref{alr.1}), we deduce that
\be\label{alr.3} \left(\frac{g^{(1)}}{g}+\frac{1}{2}\frac{h^{(1)}}{h}\right)^{2}=\frac{1}{4}\left(\frac{h^{(1)}}{h}\right)^{2}-\frac{\alpha^2 h^n}{n^2}.\ee

Let
\bea\label{alr.4} \xi=\frac{g^{(1)}}{g}+\frac{1}{2}\frac{h^{(1)}}{h}.\eea

Then from (\ref{alr.3}) and (\ref{alr.4}), we have
\be\label{alr.5}\xi^2=\frac{1}{4}\left(\frac{h^{(1)}}{h}\right)^{2}-\frac{\alpha^2 h^n}{n^2}.\ee

First we suppose $\xi\equiv 0$.
Then from (\ref{alr.5}), we deduce that
$h^n= \frac{n^2}{4\alpha^2}\left(\frac{h^{(1)}}{h}\right)^{2}$.\\
Since $T(r,\alpha)=S(r,h)$, we deduce that $T(r,h)=S(r,h)$, which is impossible.

Next we suppose $\xi\not\equiv 0$.
Since $T(r,\alpha)=S(r,h)$ and $N(r,h)=S(r,h)$, from (\ref{alr.5}) we have
\bea\label{alr.5a} 2T(r,\xi)=T(r,\xi^2)+S(r,\xi)&=&T\left(r,\frac{1}{4}\left(\frac{h^{(1)}}{h}\right)^{2}-\frac{\alpha^2 h^n}{n^2}\right)+S(r,\xi)\\&=&m\left(r,\frac{1}{4}\left(\frac{h^{(1)}}{h}\right)^{2}-\frac{\alpha^2 h^n}{n^2}\right)+N\left(r,\frac{1}{4}\left(\frac{h^{(1)}}{h}\right)^{2}-\frac{\alpha^2 h^n}{n^2}\right)+S(r,\xi)\nonumber\\&\leq&
n\;m(r,h)+S(r,h)+S(r,\xi)\nonumber\\&\leq&nT(r,h)+S(r,h)+S(r,\xi).\nonumber\eea

This shows that $S(r,\xi)$ can be replaced by $S(r,h)$.
Differentiating (\ref{alr.5}) once, we get
\bea\label{alr.6} 2\xi\xi^{(1)}&=& \frac{1}{2}\frac{h^{(1)}}{h}\left(\frac{h^{(1)}}{h}\right)^{(1)}-\frac{ \alpha^2 h^{n-1}h^{(1)}}{n}-\frac{2 \alpha\alpha^{(1)} h^n}{n^2}\\&=&
\frac{1}{2}\frac{h^{(1)}}{h}\left(\frac{h^{(1)}}{h}\right)^{(1)}-\frac{ \alpha^2 h^{n}}{n^2}\left(n\frac{h^{(1)}}{h}+2\frac{\alpha^{(1)}}{\alpha}\right).\nonumber\eea

Now from (\ref{alr.5}) and (\ref{alr.6}), we get
\be\label{alr.7}\frac{ \alpha^2 h^{n}}{n^2}\left(n\frac{h^{(1)}}{h}+2\frac{\alpha^{(1)}}{\alpha}-2\frac{\xi^{(1)}}{\xi}\right)= \frac{1}{2}\frac{h^{(1)}}{h}\left(\left(\frac{h^{(1)}}{h}\right)^{(1)}-\frac{h^{(1)}}{h}\frac{\xi^{(1)}}{\xi}\right).\ee

Let $n\frac{h^{(1)}}{h}+2\frac{\alpha^{(1)}}{\alpha}-2\frac{\xi^{(1)}}{\xi}\equiv 0$. Since $h$ is transcendental, from (\ref{alr.7}) we get $\left(\frac{h^{(1)}}{h}\right)^{(1)}-\frac{h^{(1)}}{h}\frac{\xi^{(1)}}{\xi}\equiv 0$. On integration we get $\frac{h^{(1)}}{h}=c_0\xi$,
where $c_0$ is a non-zero constant. If $c_0=2$, then from (\ref{alr.4}) we get a contradiction. Hence $c_0\neq 2$ and so from (\ref{alr.4}) we get
$\frac{g^{(1)}}{g}=\frac{1-c_0}{c_0}\frac{h^{(1)}}{h}$.
Now from (\ref{alr.5}), we get
\bea\label{alr.9} h^n=\frac{n^2}{ \alpha^2}\left(\frac{1}{4}-\frac{1}{c_0^2}\right)\left(\frac{h^{(1)}}{h}\right)^{(1)}.\eea
Since $T(r,\alpha)=S(r,h)$, from (\ref{alr.9}) we conclude that $T(r,h)=S(r,h)$, which is impossible.

Let $n\frac{h^{(1)}}{h}+2\frac{\alpha^{(1)}}{\alpha}-2\frac{\xi^{(1)}}{\xi}\not\equiv 0$.
Note that $N\big(r,\frac{h^{(1)}}{h}\big)=S(r,h)$. Since $g=R_2\exp(\delta_2)$ and $T(r,R_2)=S(r,h)$, it follows that $N(r,\xi)=S(r,h)$. Now from (\ref{alr.7}), we get
\bea\label{alr.10} nT(r,h)&=&m(r,h^n)+S(r,h)\\&\leq& m\left(r,\frac{1}{2}\frac{h^{(1)}}{h}\left(\left(\frac{h^{(1)}}{h}\right)^{(1)}-\frac{h^{(1)}}{h}\frac{\xi^{(1)}}{\xi}\right)\right)\nonumber\\&&+m\left(r,\frac{1}{\alpha^2\left(n\frac{h^{(1)}}{h}+2\frac{\alpha^{(1)}}{\alpha}-2\frac{\xi^{(1)}}{\xi}\right)}\right)+S(r,h)\nonumber\\&\leq&m\left(r,\frac{1}{2}\frac{h^{(1)}}{h}\left(\left(\frac{h^{(1)}}{h}\right)^{(1)}-\frac{h^{(1)}}{h}\frac{\xi^{(1)}}{\xi}\right)\right)+m\left(r,\alpha^2\left(n\frac{h^{(1)}}{h}+2\frac{\alpha^{(1)}}{\alpha}-2\frac{\xi^{(1)}}{\xi}\right)\right)\nonumber\\&&+N\left(r,\alpha^2\left(n\frac{h^{(1)}}{h}+2\frac{\alpha^{(1)}}{\alpha}-2\frac{\xi^{(1)}}{\xi}\right)\right)+S(r,h)\nonumber\\&\leq& N\Big(r,\frac{\xi^{(1)}}{\xi}\Big)+S(r,h)+S(r,\xi)\leq T(r,\xi)+S(r,h)+S(r,\xi).\nonumber
\eea

Since $S(r,\xi)\leq S(r,h)$, from (\ref{alr.5a}) and (\ref{alr.10}) we get $\frac{n}{2}T(r,h)= S(r,h)$, which is impossible.

Hence $\delta$ is a polynomial. Clearly $\delta_2^{(1)}=\delta^{(1)}-\delta_1^{(1)}$. Next we want to prove that $\rho(\delta_1)<+\infty$ and $\rho(\delta_2)<+\infty$. If not, suppose $\rho(\delta_1)=+\infty$.
Now from (\ref{al.1.5}), (\ref{al.1.5b}) and (\ref{al.1.5bb}), we have
\bea\label{alr.11} \Big(\frac{(R_1^n)^{(1)}}{R_1^n}+n\delta_1^{(1)}\Big)\Big(\frac{(R_2^n)^{(1)}}{R_2^n}+n\delta^{(1)}-n\delta_1^{(1)}\Big)\exp(n\delta)=\frac{\alpha^{2}}{(R_{1}R_{2})^{n}}.\eea

Note that $\frac{(R_1^n)^{(1)}}{R_1^n}$ and $\frac{(R_2^n)^{(1)}}{R_2^n}+n\delta^{(1)}$ are the small functions of $\delta_1^{(1)}$. Now using Lemma \ref{l2.2}, we get
\[T\left(r,\Big(\frac{(R_1^n)^{(1)}}{R_1^n}+n\delta_1^{(1)}\Big)\Big(\frac{(R_2^n)^{(1)}}{R_2^n}+n\delta^{(1)}-n\delta_1^{(1)}\Big)\right)=2T(r,\delta_1^{(1)})+S(r,\delta_1^{(1)}),\]
which shows that $\big(\frac{(R_1^n)^{(1)}}{R_1^n}+n\delta_1^{(1)}\big)\big(\frac{(R_2^n)^{(1)}}{R_2^n}+n\delta^{(1)}-n\delta_1^{(1)}\big)$ is of infinite order. Also $\exp(n\delta)$ is of finite order. Then from (\ref{al.1.2a}) and (\ref{alr.11}), we get a contradiction. Hence $\rho(\delta_1)<+\infty$ and $\rho(\delta_2)<+\infty$.\par

{\bf Case B.} Let $k\geq 2$. We want to prove that $\rho(\delta_1)<+\infty$. If not, suppose $\rho(\delta_1)=+\infty$. Since $\rho\big(\frac{(R_1^n)^{(1)}}{R_1^n}\big)<+\infty$, it follows that $\frac{(R_1^n)^{(1)}}{R_1^n}$ is a small function of $\delta_1^{(1)}$. Therefore by Lemma \ref{l2.2}, we get
\[T\Big(r,\frac{(R_1^n)^{(1)}}{R_1^n}+n\delta_1^{(1)}\Big)=T(r,\delta_1^{(1)})+S(r,\delta_1^{(1)})\]
and so $\frac{(R_1^n)^{(1)}}{R_1^n}+n\delta_1^{(1)}$ is of infinite order. Since $\alpha$ is of finite order, we conclude that
\bea\label{alr.12a} T(r,\alpha)=S\Big(r,\frac{(R_1^n)^{(1)}}{R_1^n}+n\delta_1^{(1)}\Big).\eea

Note that $S\big(r,\frac{(R_1^n)^{(1)}}{R_1^n}+n\delta_1^{(1)}\big)=S\big(r,\frac{(f^n)^{(1)}}{f^n}\big)$.
Since $R_1=0\mapsto \alpha=0$ and $R_1=\infty\mapsto \alpha=\infty$, it follows from (\ref{al.1.1a}) that
\bea\label{alr.12} N(r,0;f^n)=N(r,0;R_1^n)\leq N(r,0,\alpha^n)\leq nT(r,\alpha).\eea
and
\bea\label{alr.12b} N(r,f^n)=N(r,R_1^n)\leq N(r,\alpha^n)\leq nT(r,\alpha).\eea

Again from (\ref{al.1}), we have
\bea\label{alr.13} N(r,0;(f^{n})^{(k)})\leq N(r,0;\alpha^{2})\leq 2T(r,\alpha).\eea

Consequently from (\ref{alr.12a})-(\ref{alr.13}), we have
\bea\label{alr.14} N(r,f^{n})+N(r,0;f^{n})+N(r,0;(f^{n})^{(k)})&\leq& 2(n+1)T(r,\alpha)\\&=&S\Big(r,\frac{(R_1^n)^{(1)}}{R_1^n}+n\delta_1^{(1)}\Big)=S\Big(r,\frac{(f^{n})^{(1)}}{f^{n}}\Big).\nonumber\eea

Then using Lemma \ref{l2.3} to (\ref{alr.14}) we conclude that $f(z)=\exp(az+b)$, where $a\not=0$ and $b$ are constants. Consequently $\delta_1$ is a polynomial, which contradicts the fact that $\rho(\delta_1)=+\infty$.

Hence $\rho(\delta_1)<+\infty$. Similarly we can prove that $\rho(\delta_2)<+\infty$. Since $\delta=\delta_1+\delta_2$, from (\ref{al.1.5}) we get
\bea\label{alr.15} \tilde P\tilde Q \exp(n\delta)\equiv \frac{\alpha^{2}}{\left(R_{1}R_2\right)^{n}}.\eea

We now prove that $\delta$ is a polynomial. If not, suppose $\delta$ is a transcendental entire function. Clearly $\rho(\exp(n\delta))=+\infty$. On the other hand $\rho\big(\frac{(R_1^n)^{(1)}}{R_1^n}+n\delta_1^{(1)}\big)<+\infty$ and $\rho\big(\frac{(R_2^n)^{(1)}}{R_2^n}+n\delta_2^{(1)}\big)<+\infty$.

Since $T(r, f^{(i)})\leq (i+1)T(r,f)+S(r,f)$, by simple calculations we derive from
(\ref{al.1.5b}) and (\ref{al.1.5bb}) that
\bea\label{alr.15a}T(r,\tilde P)\leq K_1 T\Big(r, \frac{(R_1^n)^{(1)}}{R_1^n}+n\delta_1^{(1)}\Big)+S\Big(r, \frac{(R_1^n)^{(1)}}{R_1^n}+n\delta_1^{(1)}\Big)\eea
and
\bea\label{alr.15aa}T(r,\tilde Q)\leq K_2 T\Big(r, \frac{(R_2^n)^{(1)}}{R_2^n}+n\delta_2^{(1)}\Big)+S\Big(r, \frac{(R_2^n)^{(1)}}{R_2^n}+n\delta_2^{(1)}\Big)\eea
respectively, where $K_1$ and $K_2$ are fixed positive integers. Consequently
\bea\label{alr.16}\rho(\tilde P)\leq\rho\Big(\frac{(R_1^n)^{(1)}}{R_1^n}+n\delta_1^{(1)}\Big)<+\infty\;\;
\text{and}\;\;\rho(\tilde Q)\leq\rho\Big(\frac{(R_2^n)^{(1)}}{R_2^n}+n\delta_2^{(1)}\Big)<+\infty.\eea

Since $\rho(\exp(n\delta))=+\infty$, we get a contradiction from (\ref{al.1.2a}), (\ref{alr.15}) and (\ref{alr.16}).
So $\delta$ is a polynomial.
\vspace{0.1in}

Finally by Cases A and B we prove that $\rho(\delta_1)<+\infty$, $\rho(\delta_2)<+\infty$ and $\delta=\delta_1+\delta_2$ is a polynomial.

We now consider following two sub-cases.\par

{\bf Sub-case 1.1.} Let $\rho(f)<+\infty$. Then from (\ref{al.1.1a}), we deduce that $\delta_1$ and $\delta_2$ are both polynomials. Since $\rho(\alpha)<+\infty$, we have $T(r,\delta_1^{(1)})=S(r,\alpha)=T(r,\delta_2^{(1)})$. Again since $T(r,R_1)=S(r,\alpha)=T(r,R_2)$, we get $T\big(r,\frac{(R_1^n)^{(1)}}{R_1^n}+n\delta_1^{(1)}\big)=S(r,\alpha)$ and $T\big(r,\frac{(R_2^n)^{(1)}}{R_2^n}+n\delta_2^{(1)}\big)=S(r,\alpha)$. Then from (\ref{alr.15a}) and (\ref{alr.15aa}), we see that $T(r,\tilde P)=S(r,\alpha)=T(r,\tilde Q)$. Consequently from (\ref{alr.15}), we get $\deg(\delta)=\rho(\alpha)$.
Therefore $f=R_1\exp(\delta_1)$ and $g=R_2\exp(\delta_2)$, where $R_1$ and $R_2$ are meromorphic functions such that $R_i=0\mapsto \alpha=0$, $R_i=\infty\mapsto \alpha=\infty$ and $\rho(R_i)<\rho(\alpha)$ for $i=1,2$ and $\delta_1$, $\delta_2$ are polynomials such that $\rho(\alpha)=\deg(\delta_1+\delta_2)$.\par

{\bf Sub-case 1.2.} Let $\rho(f)=+\infty$. Clearly $\delta_1$ and $\delta_2$ are transcendental entire functions. Since $\delta_1+\delta_2=\delta$, we have $T(r,\delta_1)=T(r,\delta_2)+S(r,\delta_2)$ and so $\rho(\delta_1)=\rho(\delta_2)$.
Note that $N(r,\tilde P)\leq N(r,0;R_1^n)$, $N(r,\tilde Q)\leq N(r,0;R_2^n)$ and so $N(r,\tilde P)\leq N(r,0;\alpha^n)$ and $N(r,\tilde Q)\leq N(r,0;\alpha^n)$. Consequently
$\rho_1\big(\frac{1}{\tilde P}\big)\leq \rho_1(\alpha^n)=\rho_1(\alpha)$ and $\rho_1\big(\frac{1}{\tilde Q}\big)\leq \rho_1(\alpha^n)=\rho_1(\alpha)$. Since $\rho_1(\alpha)<\rho(\alpha)$, we get
\bea\label{alr.19}\rho_1\big(\frac{1}{\tilde P}\big)<\rho(\alpha)\;\;\text{and}\;\;\rho_1\big(\frac{1}{\tilde Q}\big)<\rho(\alpha).\eea

Again from (\ref{al.1.5}) we have $N(r,0;\tilde P)\leq N(r,0;\alpha^n)$ and $N(r,0;\tilde Q)\leq N(r,0;\alpha^n)$.
Consequently
\bea\label{alr.20}\rho_1(\tilde P)\leq \rho_1(\alpha^n)=\rho_1(\alpha)<\rho(\alpha)\;\;\text{and}\;\;\rho_1(\tilde Q)\leq \rho_1(\alpha^n)=\rho_1(\alpha)<\rho(\alpha).\eea

If $\rho(\alpha)=\deg(\delta)$, then from (\ref{al.1.1a}), we get $f=R_1\exp(\delta_1)$ and $g=R_2\exp(\delta_2)$, where $R_1$ and $R_2$ are meromorphic functions such that $R_i=0\mapsto \alpha=0$, $R_i=\infty\mapsto \alpha=\infty$ and $\rho(R_i)<\rho(\alpha)$ for $i=1,2$ and $\delta_1$, $\delta_2$ are finite order transcendental entire functions such that $\delta_1+\delta_2$ is a polynomial and $\rho(\alpha)=\deg(\delta_1+\delta_2)$.

Next we consider following two sub-cases.\par
{\bf Sub-case 1.2.1.} Let $\rho(\alpha)<\deg(\delta)$. Clearly $\rho(\alpha)<\mu(\exp(\delta))$ and so by Theorem 1.B, we conclude that $T(r,\alpha)=S(r,\exp(\delta))$. Obviously $S(r,\alpha)$ can be replaced by $S(r,\exp(\delta))$.

Now we consider the following two sub-cases.\par

{\bf Sub-case 1.2.1.1.} Let $\rho(\alpha)\geq\rho(\delta_1^{(1)})$.
Since $\rho(\alpha)<\rho(\exp(\delta))$, we have $\rho(\delta_1^{(1)})=\rho(\delta_2^{(1)})<\mu(\exp(\delta))$ and so by Theorem 1.B, we get $T(r,\delta_1^{(1)})=S(r,\exp(\delta))=T(r,\delta_2^{(1)})$. Also we have $T(r,\alpha)=S(r,\exp(\delta))$. In this case we can easily prove that $T(r,\tilde P)=S(r,\exp(\delta))=T(r,\tilde Q)$. Consequently from (\ref{alr.15}), we get $T(r,\exp(\delta))=S(r,\exp(\delta))$, which is impossible.\par
{\bf Sub-case 1.2.1.2.} Let $\rho(\alpha)<\rho(\delta_1^{(1)})$.

We want to prove that $\rho(\alpha)\leq \rho(\tilde P)$. If possible suppose $\rho(\alpha)>\rho(\tilde P)$. Then since $\mu(\alpha)=\rho(\alpha)$, we have $\rho(\tilde P)<\mu(\alpha)$ and so by Theorem 1.B, we get $T(r,\tilde P)=S(r,\alpha)$.

First we suppose $k=1$. In this case, we have $T\big(r,\frac{(R_1^n)^{(1)}}{R_1^n}+n\delta_1^{(1)}\big)=S(r,\alpha)$.
Note that
\[T(r,n\delta_1^{(1)})\leq T\Big(r,\frac{(R_1^n)^{(1)}}{R_1^n}+n\delta_1^{(1)}\Big)+T\Big(r,\frac{(R_1^n)^{(1)}}{R_1^n}\Big)+O(1)\leq T(r,\alpha)+S(r,\alpha),\]
and so $\rho(\delta_1)\leq \rho(\alpha)$, which is impossible.

Next we suppose $k\geq 2$.
If $\rho(\alpha)\geq\rho(\tilde Q)$, then $\rho(\tilde Q)\leq \rho(\alpha)<\rho(\exp(\delta))=\mu(\exp(\delta))$ and so by Theorem 1.B, we obtain $T(r,\tilde Q)=S(r,\exp(\delta))$. In this case also we get at a contradiction.
Hence $\rho(\alpha)<\rho(\tilde Q)$. Now from (\ref{alr.19}) and (\ref{alr.20}), we see that $0$ and $\infty$ are the Borel exceptional values of $\tilde Q$. Now Theorem 1.C gives $\mu(\tilde Q)=\rho(\tilde Q)$ and so $\rho(\alpha)<\mu(\tilde Q)$. Therefore by Theorem 1.B, we get $T(r,\alpha)=S(r,\tilde Q)$.
Now proceeding in the same as done in Case B, we get
$N(r,g^{n})+N(r,0;g^{n})+N(r,0;(g^{n})^{(k)})\leq 2(n+1)T(r,\alpha)$
and so
\bea\label{alr.22} N(r,g^{n})+N(r,0;g^{n})+N(r,0;(g^{n})^{(k)})=S(r,\tilde Q).\eea

Again from (\ref{alr.15aa}), we see that $S(r,\tilde Q)\leq S\Big(r,\frac{(R_2^n)^{(1)}}{R_2^n}+n\delta_2^{(1)}\Big)$.
Then from (\ref{alr.22}), we get
\bea\label{alr.24} N(r,g^{n})+N(r,0;g^{n})+N(r,0;(g^{n})^{(k)})=S\Big(r,\frac{(R_2^n)^{(1)}}{R_2^n}+n\delta_2^{(1)}\Big)=S\Big(r,\frac{(g^{n})^{(1)}}{g^{n}}\Big).\eea

Then using Lemma \ref{l2.3} to (\ref{alr.24}) we get $g(z)=\exp(cz+d)$, where $c\not=0$ and $d$ are constants. Therefore we get a contradiction, since $\rho(g)=+\infty$.

Hence $\rho(\alpha)\leq \rho(\tilde P)$. Similarly we can prove that $\rho(\alpha)\leq \rho(\tilde Q)$.

Suppose $\rho(\alpha)=\rho(\tilde P)$. Then from above we have $T(r,\tilde P)=S(r,\exp(\delta))$. If $\rho(\alpha)=\rho(\tilde Q)$, then we also have $T(r,\tilde Q)=S(r,\exp(\delta))$. Since $T(r,\alpha)=S(r,\exp(\delta))$, from (\ref{alr.15}) we get $T(r,\exp(\delta))=S(r,\exp(\delta))$, which is impossible. If $\rho(\alpha)<\rho(\tilde Q)$, then from above we can easily deduce that $g(z)=\exp(cz+d)$, where $c\not=0$ and $d$ are constants, which is impossible.
Hence $\rho(\alpha)<\rho(\tilde P)$. Similarly $\rho(\alpha)<\rho(\tilde Q)$. In this case also from above we have
\bea\label{alr.25}T(r,\alpha)=S(r,\tilde P)\;\;\text{and}\;\;T(r,\alpha)=S(r,\tilde Q).\eea

If $k\geq 2$, then as above we get $f(z)=\exp(az+b)$ and $g(z)=\exp(cz+d)$, where $a,c\not=0$ and $b,d$ are constants, which is impossible. Hence $k=1$ and so from (\ref{alr.25}) we get
$T(r,\alpha)=S\big(r,\frac{(R_1^n)^{(1)}}{R_1^n}+n\delta_1^{(1)}\big)$. Clearly $S(r,\alpha)\leq S\big(r,\frac{(R_1^n)^{(1)}}{R_1^n}+n\delta_1^{(1)}\big)$.
Since $T(r,R_1)=S(r,\alpha)$, we can deduce that
\bea\label{alr.27}T\Big(r,\frac{(R_1^n)^{(1)}}{R_1^n}+n\delta_1^{(1)}\Big)\leq T(r,n\delta_1^{(1)})+S\Big(r,\frac{(R_1^n)^{(1)}}{R_1^n}+n\delta_1^{(1)}\Big)\eea
and
\bea\label{alr.28}T(r,n\delta_1^{(1)})\leq T\Big(r,\frac{(R_1^n)^{(1)}}{R_1^n}+n\delta_1^{(1)}\Big)+S\Big(r,\frac{(R_1^n)^{(1)}}{R_1^n}+n\delta_1^{(1)}\Big).\eea

Clearly from (\ref{alr.27}) and (\ref{alr.28}) we get
$S(r,n\delta_1^{(1)})=S\big(r,\frac{(R_1^n)^{(1)}}{R_1^n}+n\delta_1^{(1)}\big)$.

Since $T\big(r,\frac{(R_2^n)^{(1)}}{R_2^n}\big)=S(r,\alpha)$, it follows that $T\big(r,\frac{(R_1^n)^{(1)}}{R_1^n}\big)=S(r,n\delta_1^{(1)})$.
Similarly $T\big(r,\frac{(R_2^n)^{(1)}}{R_2^n}\big)=S(r,n\delta_2^{(1)})$. Since $\delta_1+\delta_2=\delta$, we get $S(r,n\delta_1^{(1)})=S(r,n\delta_2^{(1)})$. So $T\big(r,\frac{(R_2^n)^{(1)}}{R_2^n}+n\delta^{(1)}\big)=S(r,n\delta_1^{(1)})$.

Let $\frac{(R_1^n)^{(1)}}{R_1^n}+\frac{(R_2^n)^{(1)}}{R_2^n}+n\delta^{(1)}\equiv 0$. On integration, we get $(R_1R_2)^n=c_0\exp(-n\delta)$, where $c_0$ is a non-zero constant. Then from (\ref{alr.11}) we get $-c_0\big(\frac{(R_1^n)'}{R_1^n}+n\delta_1^{(1)}\big)^2=\alpha^2$, which shows that $\rho(\delta_1^{(1)})=\rho(\alpha)$. Therefore we get a contradiction.

Let $\frac{(R_1^n)^{(1)}}{R_1^n}+\frac{(R_2^n)^{(1)}}{R_2^n}+n\delta^{(1)}\not\equiv 0$. Now using Lemma \ref{l2.6a}, we get
\beas T(r,\delta_1^{(1)})&\leq& \ol N(r,\delta_1^{(1)})+\ol N\Big(r,0;\frac{(R_1^n)^{(1)}}{R_1^n}+n\delta_1^{(1)}\Big)+\ol N\Big(r,0;\frac{(R_2^n)^{(1)}}{R_2^n}+n\delta^{(1)}-n\delta_1^{(1)}\Big)+S(r,\delta_1^{(1)})\\&\leq& 2T(r,\alpha_1)+S(r,\delta_1^{(1)}),\eeas
which shows that $\rho(\delta_1^{(1)})\leq \rho(\alpha_1)<\rho(\alpha)$, which is impossible.\par

{\bf Sub-case 1.2.2.} Let $\rho(\alpha)>\deg(\delta)$. Clearly $\rho(\exp(\delta))<\mu(\alpha)$ and so $T(r,\exp(\delta))=S(r,\alpha)$.
Since $\delta_1+\delta_2=\delta$ and $T(r,R_1)=S(r,\alpha)=T(r,R_2)$, we have
\bea\label{alr.30} T\Big(r,\frac{(R_2^n)^{(1)}}{R_2^n}+n\delta_2^{(1)}\Big)=T\Big(r,-\frac{(R_2^n)^{(1)}}{R_2^n}+n\delta_1^{(1)}\Big)+O(\log r)\leq
T\Big(r,\frac{(R_1^n)^{(1)}}{R_1^n}+n\delta_1^{(1)}\Big)+S(r,\alpha).\eea

Similarly we have
\bea\label{alr.30a} T\Big(r,\frac{(R_1^n)^{(1)}}{R_1^n}+n\delta_1^{(1)}\Big)\leq
T\Big(r,\frac{(R_2^n)^{(1)}}{R_2^n}+n\delta_2^{(1)}\Big)+S(r,\alpha).\eea

Then using Lemma \ref{l2.2}, (\ref{alr.15})-(\ref{alr.15aa}) and (\ref{alr.30})-(\ref{alr.30a}) we get
\beas 2T(r,\alpha)+S(r,\alpha)\leq
(K_1+2K_2)T\Big(r,\frac{(R_1^n)^{(1)}}{R_1^n}+n\delta_1^{(1)}\Big)+S\Big(r, \frac{(R_1^n)^{(1)}}{R_1^n}+n\delta_1^{(1)}\Big)+S(r,\alpha)
\eeas
and so $S(r,\alpha)$ can be replaced by $S\big(r,\frac{(R_1^n)^{(1)}}{R_1^n}+n\delta_1^{(1)}\big)$.

Suppose $k\geq 2$. Then proceeding in the same way as done in Case B, we get
$N(r,f^{n})+N(r,0;f^{n})+N(r,0;(f^{n})^{(k)})\leq 2(n+1)T(r,\alpha_1)$
and so from (\ref{al.1.0a}) we get
\beas N(r,f^{n})+N(r,0;f^{n})+N(r,0;(f^{n})^{(k)})=S\Big(r,\frac{(f^{n})^{(1)}}{f^{n}}\Big).\eeas

Now using Lemma \ref{l2.3}, we get $f(z)=\exp(az+b)$, where $a\not=0$ and $b$ are constants. Therefore we get a contradiction.

Suppose $k=1$. Then proceeding in the same way as done in sub-case 1.2.1.2, we can conclude that $T\big(r,\frac{(R_2^n)^{(1)}}{R_2^n}+n\delta^{(1)}\big)=S(r,n\delta_1^{(1)})=T\big(r,\frac{(R_1^n)^{(1)}}{R_1^n}\big)$.

Let $\frac{(R_1^n)^{(1)}}{R_1^n}+\frac{(R_2^n)^{(1)}}{R_2^n}+n\delta^{(1)}\equiv 0$. On integration, we get $(R_1R_2)^n=c_0\exp(-n\delta)$, which shows that $R_1$ and $R_2$ have no zeros and poles. Consequently $R_1$ and $R_2$ are non-zero constants and so $\delta$ is a constant. Now from (\ref{alr.11}) we get $-c_0\big(n\delta_1^{(1)}\big)^2=\alpha^2$. Let us take $\delta_1^{(1)}=c\alpha$, where $c$ is a non-zero constant. On integration we get $\delta_1(z)=c\int_{0}^z \alpha(t)dt+d_1$, where $d_1$ is a constant. Since $\delta_1+\delta_2=\delta$, we take $\delta_2(z)=-c\int_{0}^z \alpha(t)dt+d_2$, where $d_2$ is a constant. Finally we take $f=c_1\exp(c\beta)$ and $g=c_2\exp(-c\beta)$, where $\beta(z)=\int_{0}^z \alpha(t)dt$, $c$, $c_1$ and $c_2$ are non-zero constants such that $-(nc)^2(c_1c_2)^n=1$.

Let $\frac{(R_1^n)^{(1)}}{R_1^n}+\frac{(R_2^n)^{(1)}}{R_2^n}+n\delta^{(1)}\not\equiv 0$. Then proceeding in the same way as done in sub-case 1.2.1.2, we can conclude that $\rho(\delta_1)\leq \rho(\alpha_1)<\rho(\alpha)$. Since $\rho(\alpha)=\mu(\alpha)$, we have $\rho(\delta_1)<\mu(\alpha)$ and so by Theorem 1.B, we get $T(r,\delta_1)=S(r,\alpha)$. Therefore from (\ref{alr.11}) we get $T(r,\alpha)=S(r,\alpha)$, which is impossible.\par

{\bf Case 2.} Let $\rho(\alpha)=0$. In this case $\alpha$ has only finitely many zeros and poles and so $\alpha$ is a non-zero rational function. Therefore $f=R_1\exp(\delta_1)$ and $g=R_2\exp(\delta_2)$, where $R_1$, $R_2$ are two non-zero rational functions and $\delta_1$, $\delta_2$ are two non-constant entire functions.

Suppose $F=\frac{f^n}{\alpha}$ and $G=\frac{g^n}{\alpha}$.
Let $\mathcal{F}=\{F_{\omega}\}$ and $\mathcal{G}=\{G_{\omega}\}$, where $F_{\omega}(z)=F(z+\omega)$ and $G_{\omega}(z)=G(z+\omega)$, $z\in \mathbb{C}$. Clearly $\mathcal{F}$ and $\mathcal{G}$ are two families of meromorphic functions defined on $\mathbb{C}$.\par

Now we consider following sub-cases.\par
{\bf Sub-case 2.1.} Let $\mathcal{F}$ be normal on $\mathbb{C}$. Then by Theorem 1.D we have $F^{\#}(\omega)=F^{\#}_{\omega}(0)\leq M$ for some $M>0$ and for all $\omega\in\mathbb{C}$. Hence by Lemma \ref{l2.8} we get $\rho(F)\leq 1$. Since $\rho(\alpha)=0$, we have $\rho(f)=1$ and so $\rho(g)=1$. Clearly $\deg(\delta_1)=1$ and $\deg(\delta_2)=1$. Also $\tilde P$ and $\tilde Q$ are non-zero rational functions. Now from (\ref{al.1.5}), we see that $\delta_1+\delta_2$ is a constant, say $c$ and so $\delta_1^{(1)}=-\delta_2^{(1)}$. Then from (\ref{al.1.5})-(\ref{al.1.5bb}), we get
\bea\label{xr.3}&&\left(\left(\frac{(R_1^n)^{(1)}}{R_1^n}+n\delta_1^{(1)}\right)^{k}+P^{*}_{k-1}\left(\frac{(R_1^n)^{(1)}}{R_1^n}+n\delta_1^{(1)}\right)\right)\left(\left(\frac{(R_2^n)^{(1)}}{R_2^n}-n\delta_1^{(1)}\right)^{k}+P^{*}_{k-1}\left(\frac{(R_2^n)^{(1)}}{R_2^n}-n\delta_1^{(1)}\right)\right)\nonumber\\&&=\frac{\exp(-nc)\alpha^2}{(R_1R_2)^{n}}.\eea

Letting $|z|\to \infty$, we get from (\ref{xr.3}) that $ 2\deg(\alpha)=n\deg(R_1R_2)$.
Finally $f=R_1\exp(\delta_1)$ and $g=R_2\exp(-\delta_1)$, where $R_{1}$, $R_{2}$ are non-zero rational functions and $\delta_1$ is non-constant polynomial such that $\deg(\delta_1)=1$ and $ 2\deg(\alpha)=n\deg(R_1R_2)$. In particular if $\alpha\equiv 1$, then $f(z)=c_1\exp(cz)$ and $g(z)=c_2\exp(-cz)$, where $c$, $c_1$ and $c_2$ are non-zero constants such that $(-1)^k(c_1c_2)^n(nc)^{2k}=1$.\par

{\bf Sub-case 2.2.} Let $\mathcal{F}$ be not normal on $\mathbb{C}$. Then there exists at least one $z_{0}\in \Delta$ such that $\mathcal{F}$ is not normal at $z_{0}$. For the sake of simplicity we assume that $z_{0}=0$. Now by Theorem 1.D, there exists a sequence of meromorphic functions $\{F(z+\omega_{j})\}\subset \mathcal{F}$, where $z\in \Delta$ and $\{\omega_{j}\}$ is a sequence of complex numbers such that $F^{\#}(\omega_{j})\rightarrow \infty$ as $\omega_{j}\rightarrow \infty$.
Since $\alpha$ have finitely many zeros and poles, there exists a $r>0$ such that $\alpha\not=0, \infty$ in $D=\{z: |z|\geq r\}$.
We know that $R_i=0\mapsto \alpha=0$ and $R_i=\infty\mapsto \alpha=\infty$ for $i=1,2$. Therefore $F\neq 0, \infty$ and $G\neq 0, \infty$ in $D=\{z: |z|\geq r\}$. As $\omega_{j}\rightarrow \infty$ as $j\rightarrow \infty$, without loss of generality we may assume that $|\omega_{j}|\geq r+1$ for all $j$.
Note that $|\omega_{j}+z|\geq |\omega_{j}|-|z|$ and so $\omega_{j}+z\in D$ for all $z\in \Delta$. Consequently $\{F(z+\omega_{j})\}$ and $\{G(z+\omega_{j})\}$ are two families of analytic functions in $z\in\Delta$ having no zeros. Now by Lemma 1.A, there exist
\begin{enumerate}
\item[(i)] a sequence $\{z_j\}\subset\Delta$ with $z_j\rightarrow 0$ as $j\rightarrow \infty$;
\item[(ii)] a sequence $\{\rho_j\}$ of positive numbers with $\rho_j\rightarrow 0^{+}$ as $j\rightarrow \infty$;
\item[(iii)] a subsequence $\{F(\omega_j+z_{j}+\rho_{j}\zeta)\}$ of $\{F(\omega_j+z)\}$
\end{enumerate}
such that
\bea\label{xa.4} h_{j}(\zeta)=\rho_j^{-k}F(\omega_j+z_{j}+\rho_{j}\zeta)= \rho_j^{-k}\frac{f^n(\omega_{j}+z_{j}+\rho_{j}\zeta)}{\alpha(\omega_{j}+z_{j}+\rho_{j}\zeta)}\rightarrow h(\zeta)\eea
spherically locally uniformly in $\mathbb{C}$, where $h$ is a non-constant entire function having no zeros such that $h^{\#}(\zeta)\leq h^{\#}(0)=1$. Clearly by Lemma \ref{l2.8} we have $\rho(h)\leq 1$.
It is also clear that
\bea\label{xa.6} \frac{\alpha^{(1)}(\omega_{j}+z_{j}+\rho_{j}\zeta)}{\alpha(\omega_{j}+z_{j}+\rho_{j}\zeta)}\rightarrow 0\eea
as $j\rightarrow \infty$. Now by a simple calculation we deduce from (\ref{xa.4}) that
\bea\label{xa.7} \rho_{j}^{-k+1}\frac{(f^n)^{(1)}(\omega_{j}+z_{j}+\rho_{j}\zeta)}{\alpha(\omega_{j}+z_{j}+\rho_{j}\zeta)}&=&h_{j}^{(1)}(\zeta)+\rho_{j}^{-k+1}\frac{\alpha^{(1)}(\omega_{j}+z_{j}+\rho_{j}\zeta)}{\alpha^{2}(\omega_{j}+z_{j}+\rho_{j}\zeta)}f^n(\omega_{j}+z_{j}+\rho_{j}\zeta)\\&=&h_{j}^{(1)}(\zeta)+\rho_{j}\frac{\alpha^{(1)}(\omega_{j}+z_{j}+\rho_{j}\zeta)}{\alpha(\omega_{j}+z_{j}+\rho_{j}\zeta)}h_{j}(\zeta).\nonumber \eea

Then using (\ref{xa.4}), (\ref{xa.6}) to (\ref{xa.7}) we conclude that $\rho_{j}^{-k+1}\frac{(f^n)^{(1)}(\omega_{j}+z_{j}+\rho_{j}\zeta)}{\alpha(\omega_{j}+z_{j}+\rho_{j}\zeta)}\rightarrow h^{(1)}(\zeta)$ spherically locally uniformly in $\mathbb{C}$.
Therefore by mathematical induction, we can prove that
\bea\label{xa.8} h_{j}^{(i)}(\zeta)=\frac{(f^n)^{(i)}(\omega_{j}+z_{j}+\rho_{j}\zeta)}{\alpha(\omega_{j}+z_{j}+\rho_{j}\zeta)}\rightarrow h^{(i)}(\zeta)\eea
spherically locally uniformly in $\mathbb{C}$, where $i$ is any positive integer.

Now from (\ref{xa.4}) and (\ref{xa.8}) we see that
\beas \frac{h_{j}^{(1)}(0)}{h_{j}(0)}=\rho_{j}\frac{(f^n)^{(1)}(w_{j}+z_{j})}{f^n(w_{j}+z_{j})}\rightarrow \frac{h^{(1)}(0)}{h(0)}\eeas
and so from (\ref{1.1}), we get
\bea\label{xa.15} \rho_{j}\left|\frac{(f^n)^{(1)}(\omega_{j}+z_{j})}{f^n(\omega_{j}+z_{j})}\right|=\frac{1+|f^n(\omega_{j}+z_{j})|^{2}}{|(f^n)^{(1)}(\omega_{j}+z_{j})|}\frac{|(f^n)^{(1)}(\omega_{j}+z_{j})|}{|f^n(\omega_{j}+z_{j})|}=\frac{1+|f^n(\omega_{j}+z_{j})|^{2}}{|f^n(\omega_{j}+z_{j})|}\rightarrow \left|\frac{h^{(1)}(0)}{h(0)}\right|.\eea

Let
\beas\label{xa.9} (\hat{h}_{j})^{(k)}(\zeta)=\frac{(g^n)^{(k)}(\omega_{j}+z_{j}+\rho_{j}\zeta)}{\alpha(\omega_{j}+z_{j}+\rho_{j}\zeta)}.\eeas

Then from (\ref{al.1}), we have
\bea\label{xa.10}(h_{j})^{(k)}(\zeta)(\hat{h}_{j})^{(k)}(\zeta)=1.\eea

Clearly applying (\ref{xa.8}) to (\ref{xa.10}) we conclude that $(\hat{h}_{j})^{(k)}(\zeta)\rightarrow \hat{h}_{1}(\zeta)$
spherically locally uniformly in $\mathbb{C}$, where $\hat{h}_{1}$ is a non-constant entire function having no zeros.
Applying the formula of higher derivatives, we can deduce that $\hat{h}_{j}(\zeta)\rightarrow \hat{h}(\zeta)$
spherically locally uniformly in $\mathbb{C}$.
Therefore from (\ref{xa.8}) and (\ref{xa.10}) we get
\bea\label{xa.13} h^{(k)}(\zeta)\hat{h}^{(k)}(\zeta)\equiv 1.\eea

Since $\rho(h)\leq 1$, from (\ref{xa.13}) we get $\rho(\hat{h})\leq 1$. As $h$ and $\hat{h}$ are non-constant entire functions having no zeros, it follows that $\rho(h)=1$ and $\rho(\hat{h})=1$.
In this case from (\ref{xa.13}), we can conclude that $h(z)=c_{1}\exp(cz)$ and $\hat{h}(z)=c_{2}\exp(-cz)$,
where $c$, $c_{1}$ and $c_{2}$ are non-zero constants.
Now from (\ref{xa.15}), we get
\beas \frac{1+|f^n(\omega_{j}+z_{j})|^{2}}{|f^n(\omega_{j}+z_{j})|}\rightarrow |c|,\eeas
which shows that $\lim\limits_{j\rightarrow \infty} f^n(\omega_{j}+z_{j})\not=0,\infty$ and so
 (\ref{xa.4}), we have $h_{j}(0)=\rho_{j}^{-k}\frac{f^n(\omega_{j}+z_{j})}{ \alpha(\omega_{j}+z_{j})}\rightarrow \infty$.
On the other hand from (\ref{xa.4}), we get $h_{j}(0)\rightarrow h(0)=c_{1}$.
Consequently, we get a contradiction.\par

This completes the proof.

\end{proof}

\begin{proof}[{\bf Proof of Theorem \ref{t2.2}}]
Suppose
\be\label{al.2} (f^{n})^{(k)}(g^{n})^{(k)}\equiv \alpha^{2}.\ee

Clearly $\rho(f)=\rho(g)$. Since $T(r,f)=O(T(r,(f^n)^{(k)}))$ and $T(r,(f^n)^{(k)})=O(T(r,f))$, we can conclude that $\rho_2(f)=\rho_2((f^n)^{(k)})$. Similarly $\rho_2(g)=\rho_2((g^n)^{(k)})$. Again from (\ref{al.2}) we can deduce that $\rho_2((f^n)^{(k)})=\rho_2((g^n)^{(k)})$. Therefore we conclude that
\bea\label{aaa.12} \rho_2(f)=\rho_2(g).\eea

Let $f=\frac{f_{11}}{f_{12}}$, where $f_{11}$ and $f_{12}\not\equiv 0$ are entire functions.
Then by Weierstrass factorization theorem, we can write
$f_{11}(z)=z^{k_{11}}P_{1,0}(z)\exp(\delta_{11}(z))$, where $k_{11}$ is of multiplicity of $0$, $P_{1,0}$ is the product of primary factors formed with the non-null zeros of $f$ and $\delta_{11}$ is an entire function. Again by Weierstrass factorization theorem, we can write
$f_{12}(z)=z^{k_{12}}P_{1,\infty}(z)\exp(\delta_{12}(z))$,
where $k_{12}$ is of multiplicity of $0$, $P_{1,\infty}$ is the product of primary factors formed with the non-null poles of $f$ and $\delta_{12}$ is an entire function. Therefore
\bea\label{a1.3} f=R_1\exp(\delta_1),\eea
where $R_1(z)=z^{k_{1}}\frac{P_{1,0}(z)}{P_{1,\infty}(z)}$, $k_1=k_{11}-k_{12}$ and $\delta_1=\delta_{11}-\delta_{12}$. Similarly we can take
\bea\label{a1.4} g=R_2\exp(\delta_2),\eea
where $R_2(z)=z^{k_{2}}\frac{P_{2,0}(z)}{P_{2,\infty}(z)}$, $k_1=k_{21}-k_{22}$ and $\delta_2=\delta_{21}-\delta_{22}$.
It is to be noted that both $R_1$ and $R_2$ are non-zeros rational functions only when both $f$ and $g$ have finitely many zeros and poles.

First we want to estimate the counting functions $\ol N(r,0;f)$ and $\ol N(r,0;g)$.

For this let $z_{0}$ be a zero of $f$ of multiplicity $p_0$. Then $z_{0}$ is also a zero of $(f^{n})^{(k)}$ of multiplicity $np_0-k$.
Therefore one of the following possibilities holds for $z_0$:
\begin{enumerate}
\item[(i)] $z_{0}$ is neither a zero of $(g^{n})^{(k)}$ nor a pole of $g$;
\item[(ii)] $z_{0}$ is a zero of $(g^{n})^{(k)}$ but not a zero of $g$;
\item[(iii)] $z_{0}$ is a zero of $g$;
\item[(iv)] $z_{0}$ is a pole of $g$.
\end{enumerate}

For the possibility $(i)$: In this case, from (\ref{al.2}) we see that $z_{0}$ is a zero of $\alpha$ of multiplicity $\frac{np_0-k}{2}\geq p_0$.

For the possibility $(ii)$: Let $z_0$ be a zero of $(g^n)^{(k)}$ of multiplicity $q_0$ such that $g(z_0)\neq 0$. In this case, from (\ref{al.2}), we see that $z_{0}$ is a zero of $\alpha$ of multiplicity $\frac{np_0-k+q_0}{2}>p_0$.

For the possibility $(iii)$: Let $z_0$ be a zero of $g$ of multiplicity $q_0$. Then from (\ref{al.2}), we see that $z_{0}$ is a zero of $\alpha$ of multiplicity $\frac{n(p_0+q_0)-2k}{2}\geq p_0$.

For the possibility $(iv)$: Let $z_0$ be a pole of $g$ of multiplicity $s_0$. Since $n>2k$, it is not possible that $np_0-k=ns_0+k$.
 If $np_0-k>ns_0+k$, then from (\ref{al.2}) we see that $z_{0}$ is a zero of $\alpha$ of multiplicity $\frac{n(p_0-s_0)-2k}{2}\geq 1$. On the other hand if $np_0-k<ns_0+k$, then from (\ref{al.2}) we see that $z_{0}$ is a pole of $\alpha$ of multiplicity $\frac{n(s_0-p_0)+2k}{2}\geq 1$. Since $n>2k$, we must have $p_0\leq s_0$.

Therefore we conclude that
\bea\label{aaa.1}\label{aaa.5} \ol N(r,0;f)\leq N(r,0;\alpha)+N(r,\alpha).\eea

Note that if $\infty$ is a Picard exceptional value of $g$, then we have $N(r,0;f)\leq N(r,0;\alpha)$.

Similarly we have
\bea\label{aaa.6} \ol N(r,0;g)\leq N(r,0;\alpha)+N(r,\alpha).\eea

If $\infty$ is a Picard exceptional value of $f$, then we have
\bea\label{aaa.2} N(r,0;g)\leq N(r,0;\alpha).\eea

Next we want to estimate the counting functions $\ol N(r,f)$ and $\ol N(r,g)$.

For this let $z_{1}$ be a pole of $f$ of multiplicity $p_1$. Then $z_{1}$ is also a pole of $(f^{n})^{(k)}$ of multiplicity $np_1+k$.
Then one of the possibilities holds for $z_1$: $(I)$ $z_{1}$ is a zero of $(g^{n})^{(k)}$ but not a zero of $g$; $(II)$ $z_{1}$ is neither a zero of $(g^{n})^{(k)}$ nor a pole of $g$; $(III)$ $z_{1}$ is a zero of $g$ and $(IV)$ $z_{1}$ is a pole of $g$.

For the possibility $(I)$: Let $z_1$ be a zero of $(g^n)^{(k)}$ of multiplicity $q_1$ such that $g(z_1)\neq 0$. If $q_1>np_1+k$, then $z_1$ must be a zero of $\alpha$. If $q_1<np_1+k$, then $z_1$ is a pole of $\alpha$. But nothing can be said when $q_1=np_1+k$ and so we can not estimate the counting function
\bea\label{aa.2a} N_E(r,\infty;f^n\mid (g^n)^{(k)}=0\mid g^n\not=0)\eea
with the help of zeros or poles of $\alpha$.

For the remaining possibilities, $z_1$ must be either a zero of $\alpha$ or a pole of $\alpha$.
Therefore
\bea\label{aaa.3}\label{r.2} \ol N(r,f)\leq \ol N_E(r,\infty;f^n\mid (g^n)^{(k)}=0\mid g^n\not=0)+N(r,0;\alpha)+N(r,\alpha).\eea
Similarly we have
\bea\label{aaa.4}\label{r.3} \ol N(r,g)\leq \ol N_E(r,\infty;g^n\mid (f^n)^{(k)}=0\mid f^n\not=0)+N(r,0;\alpha)+N(r,\alpha).\eea

Also from the discussion, we see that if $z_0$ is a zero of $f$ of multiplicity $p_0$ and a pole of $g$ of multiplicity $s_0$, then $z_{0}$ will be a pole of $\alpha$ of multiplicity $\frac{n(s_0-p_0)+2k}{2}$, where $p_0\leq s_0$ and $np_0-k<ns_0+k$. Certainly $z_0$ will be a pole of $R_1R_2$ of multiplicity $s_0-p_0\leq \frac{n(s_0-p_0)+2k}{2}$. Consequently we conclude that
\bea\label{aaa.10} &&N(r,0;R_1R_2)+N(r,R_1R_2)\\&\leq& 2N(r,0;\alpha)+2 N(r,\infty;\alpha)+N_E(r,\infty;f^n\mid (g^n)^{(k)}=0\mid g^n\not=0)\nonumber\\&&+N_E(r,\infty;g^n\mid (f^n)^{(k)}=0\mid f^n\not=0)\nonumber\\&\leq&
2N(r,0;\alpha)+3N(r,\infty;\alpha)+N(r,0;(g^n)^{(k)}=0\mid g^n\not=0)+N(r,0;(f^n)^{(k)}=0\mid f^n\not=0)
.\nonumber\eea

Now using Lemma \ref{l2.6} to (\ref{al.2}), we conclude from (\ref{aaa.5}) and (\ref{aaa.6}) that
\bea\label{aaa.7} (k+1)\;\ol N(r,f)&=& N_{k+1}(r,(f^{n})^{(k)})\\&=&N_{k+1}\left(r,\frac{\alpha^{2}}{(g^{n})^{(k)}}\right)\nonumber\\&\leq& N_{k+1}(r,0;(g^{n})^{(k)})+N_{k+1}(r,\alpha)\nonumber\\
&\leq& N_{2k+1}(r,0;g^{n})+k\;\ol N(r,g^{n})+(k+1)N(r,\alpha)+O(\log T(r,g)+\log r)\nonumber\\
&\leq& (2k+1)\ol N(r,0;g)+k\;\ol N(r,g)+(k+1)N(r,\alpha)+O(\log T(r,g)+\log r)\nonumber\\&\leq& k\ol N(r,g)+(3k+2)\left[N(r,0;\alpha)+N(r,\alpha)\right]+O(\log T(r,g)+\log r)\nonumber.\eea
Similarly
\bea\label{aaa.8}(k+1)\;\ol N(r,g)\leq k\;\ol N(r,f)+(3k+2)\left[N(r,0;\alpha)+N(r,\alpha)\right]+O(\log T(r,f)+\log r).\eea
Therefore from (\ref{aaa.7}) and (\ref{aaa.8}), we have
\bea\label{aaa.9}\ol N(r,f)+\ol N(r,g)&\leq& (6k+4)\left[N(r,0;\alpha)+N(r,\alpha)\right]+O(\log T(r,f)+\log r)\\&&+O(\log T(r,g)+\log r)\nonumber.\eea

Suppose $F=\frac{f^n}{\alpha}$ and $G=\frac{g^n}{\alpha}$.
Let $\mathcal{F}=\{F_{\omega}\}$ and $\mathcal{G}=\{G_{\omega}\}$, where $F_{\omega}(z)=F(z+\omega)$ and $G_{\omega}(z)=G(z+\omega)$, $z\in \mathbb{C}$. Clearly $\mathcal{F}$ and $\mathcal{G}$ are two families of meromorphic functions defined on $\mathbb{C}$.\par

We now divide following two cases.\par
{\bf Case 1.} Let $\rho(\alpha)=0$. In this case $\alpha$ has only finitely many zeros and poles and so $\alpha$ is a rational function. Then from (\ref{aaa.5}) and (\ref{aaa.6}), we conclude that both $f$ and $g$ have finitely many zeros.

We now consider the following two sub-cases.\par
{\bf Sub-case 1.1.} Let $\mathcal{F}$ be normal on $\mathbb{C}$. Then by Marty's theorem $F^{\#}(\omega)=F^{\#}_{\omega}(0)\leq M$ for some $M>0$ and for all $\omega\in\mathbb{C}$. Hence by Lemma \ref{l2.8}, we have $\rho(F)\leq 2$. Since $\rho(\alpha)=0$, it follows that $\rho(f)\leq 2$ and so $\rho(g)\leq 2$. Clearly from (\ref{a1.3}) and (\ref{a1.4}) we deduce that $\deg(\delta_1)\leq 2$ and $\deg(\delta_2)\leq 2$.
On the other hand from (\ref{aaa.9}), we get $\ol N(r,f)+\ol N(r,g)=O(\log r)$ as $r\to \infty$. This shows that $f$ and $g$ have at most finitely many poles. Therefore from (\ref{a1.3}) and (\ref{a1.4}) we conclude that both $R_1$ and $R_2$ are rational functions.
Then from (\ref{al.1.5}) we see that $\delta_1+\delta_2$ is a constant and so $\delta_1^{(1)}=-\delta_2^{(1)}$.
Now from (\ref{xr.3}) we get $2k\deg(\delta_1^{(1)})=\deg(\alpha^2/(R_1R_2)^n)$. In particular if $\alpha\equiv 1$, then both $R_1$ and $R_2$ can not have zeros.
Since $n>2k$, from $2k\deg(\delta_1^{(1)})=\deg(\alpha^2/(R_1R_2)^n)$ we conclude that $\deg(\delta_1)=1$ and $R_{1}$, $R_{2}$ reduce to non-zero constants. Finally $f=R_1\exp(\delta_1)$ and $g=R_2\exp(\delta_2)$, where $R_{1}$, $R_{2}$ are non-zero rational functions and $\delta_1$, $\delta_2$ are non-constant polynomials of degree at most two such that $\delta_1^{(1)}=-\delta_2^{(1)}$ and $2k\deg(\delta_1^{(1)})=2\deg(\alpha)-n\deg(R_1R_2)$. In particular if $\alpha\equiv 1$, then $f(z)=c_1\exp(cz)$ and $g(z)=c_2\exp(-cz)$, where $c$, $c_1$ and $c_2$ are non-zero constants such that $(-1)^k(c_1c_2)^n(nc)^{2k}=1$.\par

{\bf Sub-case 1.2.} Let $\mathcal{F}$ be not normal on $\mathbb{C}$. Suppose $\mathcal{F}$ is not normal at $z_{0}=0$.
Then proceeding in the same way as done in Sub-case 2.2 in the proof of Theorem \ref{t2.1}, we can easily construct two families
$\{F(z+\omega_{j})\}$ and $\{G(z+\omega_{j})\}$ of meromorphic functions in $z\in\Delta$ having no zeros.
Now if we consider the counting function given by (\ref{aa.2a}), then we conclude that the multiplicity of every pole of $F(z+\omega_{j})$ is a multiple of $n$. Same as for $G(z+\omega_{j})$.
Clearly we apply Lemma 1.A and we get
\bea\label{a.4} h_{j}(\zeta)=\rho_j^{-k}F_{j}(z_{j}+\rho_{j}\zeta)=\frac{f^n(\omega_{j}+z_{j}+\rho_{j}\zeta)}{\alpha(\omega_{j}+z_{j}+\rho_{j}\zeta)}\rightarrow h(\zeta)\eea
spherically locally uniformly in $\mathbb{C}$, where $h$ is a non-constant meromorphic function having no zeros such that $h^{\#}(\zeta)\leq h^{\#}(0)=1$. Clearly from Lemma \ref{l2.8}, we get $\rho(h)\leq 2$.
By Hurwitz's theorem we can say that the multiplicity of every pole of $h$ is a multiple of $n$. So we assume that $h=\bar{h}^{n}$, where $\bar{h}$ is some non-constant meromorphic function.
If we take $(\hat{h}_{j}(\zeta))^{(k)}=\frac{(g^n)^{(k)}(\omega_{j}+z_{j}+\rho_{j}\zeta)}{\alpha(\omega_{j}+z_{j}+\rho_{j}\zeta)}$,
then proceeding in the same way as done in Sub-case 2.2 in the proof of Theorem \ref{t2.1}, we can prove that
$\hat{h}_{j}(\zeta)\rightarrow \hat{h}_{1}(\zeta)$
spherically locally uniformly in $\mathbb{C}$, where $\hat{h}_{1}$ is some non-constant meromorphic function having no zeros. Using Hurwitz's theorem, we conclude that the multiplicity of every pole of $\hat{h}_{1}$ is a multiple of $n$ and so we can assume that $\hat{h}_{1}=\hat{h}^{n}$, where $\hat{h}$ is some non-constant meromorphic function.
Since $(h_{j}(\zeta))^{(k)}\rightarrow (\bar{h}^{n}(\zeta))^{(k)}$ and $(\hat{h}_{j}(\zeta))^{(k)}\rightarrow (\hat{h}^{n}(\zeta))^{(k)}$, from (\ref{al.2}) we get
\bea\label{a.13} (\bar{h}^{n}(\zeta))^{(k)}(\hat{h}^{n}(\zeta))^{(k)}\equiv 1.\eea

Since $\rho(h)\leq 2$, from (\ref{a.13}) we get $\rho(\hat{h})\leq 2$. Again from (\ref{aaa.9}) one can easily deduce that
$\ol N(r,\bar{h})+\ol N(r,\hat{h})=O(\log r)$. This shows that $\bar{h}$ and $\hat{h}$ have finitely many poles. Let $\bar{h}=\frac{1}{Q_{3}}\exp(\delta_{11})$ and $\hat{h}=\frac{1}{Q_{4}}\exp(\delta_{12})$,
where $Q_{3}$, $Q_{4}$ are non-zero polynomials and $\delta_{11}$, $\delta_{12}$ are non-constant polynomials of degree at most two. Now proceeding in the same manner as in Sub-case 1.1, we can conclude that $Q_{3}$, $Q_{4}$ are non-zero constants and $\delta_{11}$, $\delta_{12}$ are polynomials of degree one such that $\delta_{11}^{(1)}=-\delta_{12}^{(1)}$. So from (\ref{a.13}), we get $\bar{h}(z)=\bar{c}_{1}\exp(cz)$ and $\hat{h}(z)=\hat{c}_{2}\exp(-cz)$, where $c$, $\bar{c}_{1}$ and $\hat{c}_{2}$ are non-zero constants.

Now proceeding in the same manner as done in the proof of Sub-case 2.2 of Theorem \ref{t2.1}, we get a contradiction.\par

{\bf Case 2.} Let $\rho(\alpha)>0$. In this case, we have either $N(r,0;\alpha)+N(r,\alpha)=O(\log r)$ or $N(r,0;\alpha)+N(r,\alpha)\neq O(\log r)$. So
we have to consider following two sub-cases.\par
{\bf Sub-case 2.1.} Let $N(r,0;\alpha)+N(r,\alpha)=O(\log r)$ as $r\to\infty$. Consequently  $\alpha=R\exp(P)$, where $R$ is a non-zero rational function and $P$ is a polynomial. Also from (\ref{aaa.5}) and (\ref{aaa.6}), we conclude that both $f$ and $g$ have finitely many zeros.

Now we divide following two sub-cases.\par

{\bf Sub-case 2.1.1.} Let $\rho(f)<+\infty$. Clearly $\rho(g)<+\infty$. Now from (\ref{aaa.9}), we get $\ol N(r,f)+\ol N(r,g)=O(\log r)$ as $r\to \infty$. This shows that $f$ and $g$ have at most finitely many poles. Therefore from (\ref{a1.3}) and (\ref{a1.4}) we see that both $R_1$ and $R_2$ are rational functions and
$\delta_1$, $\delta_2$ are polynomials such that $\deg(\delta_1)=\rho(f)$ and $\deg(\delta_2)=\rho(g)$.
Then from (\ref{al.1.5})-(\ref{al.1.5bb}), we get
\bea\label{yr.3}&&\left(\left(\frac{(R_1^n)^{(1)}}{R_1^n}+n\delta_1^{(1)}\right)^{k}+P^{*}_{k-1}\left(\frac{(R_1^n)^{(1)}}{R_1^n}+n\delta_1^{(1)}\right)\right)\left(\left(\frac{(R_2^n)^{(1)}}{R_2^n}+n\delta_2^{(1)}\right)^{k}+P^{*}_{k-1}\left(\frac{(R_2^n)^{(1)}}{R_2^n}+n\delta_2^{(1)}\right)\right)\nonumber\\&&=\frac{\exp(-n(\delta_1+\delta_2)+2P)R^2}{(R_1R_2)^{n}}.\eea

Clearly from (\ref{yr.3}) we conclude that $n(\delta_1+\delta_2)-P$ is a constant. Since $\rho(f)=\rho(g)$, we have $\deg(\delta_1)=\deg(\delta_2)$ and so $\delta_1^{(1)}\not\equiv 0$. Therefore letting $|z|\to \infty$ on the both sides of (\ref{yr.3}) we get $2k\deg(\delta_1^{(1)})=2\deg(R)-n\deg(R_1R_2)$.
Finally $f=R_1\exp(\delta_1)$ and $g=R_2\exp(\delta_2)$, where $R_{1}$, $R_{2}$ are non-zero rational functions and $\delta_1$, $\delta_2$ are non-constant polynomials such that $\deg(\delta_1)=\deg(\delta_2)$, $n(\delta_1+\delta_2)-P$ is a constant and $2k\deg(\delta_1^{(1)})=2\deg(R)-n\deg(R_1R_2)$.\par

{\bf Sub-case 2.1.2.} Let $\rho(f)=+\infty$. Since $\rho(\alpha)<\rho(f)$, it follows that $\rho(F)=+\infty$.
Then by Lemma \ref{l2.7}, there exist a sequence $\{\omega_{j}\}$ with $\omega_{j}\rightarrow \infty$ such that for every $N>0$
\be\label{als.1} F^{\#}(\omega_{j})>|\omega_{j}|^{N},\ee if $j$ is sufficiently large.
Note that $F_{j}^{\#}(0)=F^{\#}(\omega_{j})\rightarrow \infty$ as $j\rightarrow \infty$. Then by Theorem 1.D, we conclude that $\{F_{j}\}$ is not normal at $z=0$.
We use the same methodology that is similar to Sub-case 2.2 of Theorem \ref{t2.1}, but with necessary modifications as follow:\par
From (\ref{1.1}) we have $\rho_{j}=\frac{1}{F^{\#}(\omega_{j}+z_j)}$.
Also we know that $F^{\#}(\omega_j+z_j)=F_j^{\#}(z_j)\geq F_j^{\#}(0)=F^{\#}(\omega_j)$. Therefore $\rho_j\leq \frac{1}{F^{\#}(\omega_j)}$ and so
from (\ref{als.1}), we see that for every $N>0$
\bea\label{als.3} \rho_{j}<|\omega_{j}|^{-N},\eea
if $j$ is sufficiently large. Also we have
\bea\label{als.4} \frac{\alpha^{(1)}(\omega_{j}+z_{j}+\rho_{j}\zeta)}{\alpha(\omega_{j}+z_{j}+\rho_{j}\zeta)}=\frac{R^{(1)}(\omega_{j}+z_{j}+\rho_{j}\zeta)}{R(\omega_{j}+z_{j}+\rho_{j}\zeta)}+P^{(1)}(\omega_{j}+z_{j}+\rho_{j}\zeta).\eea

Observe that $\frac{R^{(1)}(\omega_{j}+z_{j}+\rho_{j}\zeta)}{R(\omega_{j}+z_{j}+\rho_{j}\zeta)}\rightarrow 0$ as $j\rightarrow \infty$.
Let $\deg(P^{(1)})=s$. Suppose $N>s$. Then from (\ref{als.3}) we get $\lim\limits_{j\rightarrow\infty}\rho_{j}|\omega_{j}|^{s}\leq \lim\limits_{j\rightarrow \infty} |\omega_{j}|^{s-N}=0$. Since $|P^{(1)}(\omega_{j}+z_{j}+\rho_{j}\zeta)|=O(|\omega_{j}|^{s})$, we have
\bea\label{als.6} \rho_{j} |P^{(1)}(\omega_{j}+z_{j}+\rho_{j}\zeta)|=O(\rho_{j} |\omega_{j}|^{s})\rightarrow 0\;\;\text{as}\;\;j\rightarrow \infty.\eea

Then from (\ref{als.4}) and (\ref{als.6}), we have
\beas \rho_{j} \frac{\alpha^{(1)}(\omega_{j}+z_{j}+\rho_{j}\zeta)}{\alpha(\omega_{j}+z_{j}+\rho_{j}\zeta)}\rightarrow 0\;\;\text{as}\;\;j\rightarrow \infty.\eeas

Now it is clear to us that we must get a contradiction if we follow the proof of Sub-case 2.2 of Theorem \ref{t2.1}.\par

{\bf Sub-case 2.2.} Let $N(r,0;\alpha)+N(r,\alpha)\not=O(\log r)$ as $r\to\infty$. Then for a given $\varepsilon>0$, there exists $R>0$ such that
$N(r,\alpha)<r^{\rho_1(\frac{1}{\alpha})+\varepsilon}$ and $N\left(r,\frac{1}{\alpha}\right)<r^{\rho_1(\alpha)+\varepsilon}$, $\forall\;r>R$.
Let $\rho_1=\max\{\rho_1(\frac{1}{\alpha}),\rho_1(\alpha)\}$.
Therefore
$N(r,\alpha)<r^{\rho_1+\varepsilon}$ and $N\left(r,\frac{1}{\alpha}\right)<r^{\rho_1+\varepsilon}$, $\forall\;r>R$. Consequently from (\ref{aaa.5}) and (\ref{aaa.6}), we deduce that
\bea\label{xa.1}\ol N(r,0;f)<r^{\rho_1+\varepsilon}\;\;\text{and}\;\;\ol N(r,0;g)<r^{\rho_1+\varepsilon}\eea
 $\forall\;r>R$.
Following two sub-cases are immediately.\par
{\bf Sub-case 2.2.1.} Let $\rho(f)<+\infty$. Then from (\ref{aaa.9}), we have
\[\ol N(r,f)<r^{\rho_1+\varepsilon}\;\;\text{and}\;\;\ol N(r,g)<r^{\rho_1+\varepsilon}\]
$\forall\;r>R$. Using Theorem 1.A, we get
\beas T(r,\tilde P)=T\Big(r,\frac{(f^n)^{(k)}}{f^n}\Big)\leq& \ol N(r,f)+(k+1)\ol N(r,0;f)+O(\log r)<r^{\rho_1+\varepsilon},\eeas
$\forall\;r>R$. Similarly
\[T(r,\tilde Q)=T\Big(r,\frac{(g^n)^{(k)}}{g^n}\Big)<r^{\rho_1+\varepsilon},\]
$\forall\;r>R$. Consequently $\rho(\tilde P)\leq \rho_1<\rho(\alpha)=\mu(\alpha)$ and $\rho(\tilde Q)\leq \rho_1<\rho(\alpha)=\mu(\alpha)$. Then by Theorem 1.B, we have $T(r,\tilde P)=S(r,\alpha)$ and $T(r,\tilde Q)=S(r,\alpha)$.

Again since $N(r,0;(f^n)^{(k)}\mid f\neq 0)\leq T(r,\tilde P)$ and $N(r,0;(g^n)^{(k)}\mid g\neq 0)\leq T(r,\tilde Q)$, we get
\[N(r,0;(f^n)^{(k)}\mid f\neq 0)<r^{\rho_1+\varepsilon}\;\;\text{and}\;\;N(r,0;(g^n)^{(k)}\mid g\neq 0)<r^{\rho_1+\varepsilon},\]
$\forall\;r>R$. Consequently from (\ref{aaa.10}), we have
\[\rho_1(0;R_1R_2)\leq \rho_1<\rho(\alpha)\;\;\text{and}\;\;\rho_1(\infty;R_1R_2)\leq \rho_1<\rho(\alpha).\]

After cancellation of the common factors, we may assume that
\[R_1R_2=z^{k_1+k_2}\frac{P_{1,0}P_{2,0}}{P_{1,\infty}P_{2,\infty}}=z^{k_3}\frac{P_{3,0}}{P_{3,\infty}},\]
where $k_3=k_1+k_2$, $P_{3,0}$ and $P_{3,\infty}$ are the product of primary factors formed with the non-null zeros and poles respectively.
Now by Ash \cite[Theorem 4.3.6]{1}, we get $\rho(P_{3,0})=\rho_1(0;P_{3,0})\leq \rho_1<\rho(\alpha)$ and $\rho(P_{3,\infty})=\rho_1(0;P_{3,\infty})\leq \rho_1<\rho(\alpha)$. Clearly $\rho(R_1R_2)<\rho(\alpha)=\mu(\alpha)$. Then by Theorem 1.B, we have $T(r,R_1R_2)=S(r,\alpha)$.

Since $T(r,\tilde P)=S(r,\alpha)$, $T(r,\tilde Q)=S(r,\alpha)$ and $T(r,R_1R_2)=S(r,\alpha)$, from (\ref{al.1.5}) we conclude that $\deg(\delta_1+\delta_2)=\rho(\alpha)$. Also from above we have $\ol{\rho}_1(0;f)<\rho(\alpha)$, $\ol{\rho}_1(\infty;f)<\rho(\alpha)$, $\ol {\rho}_1(0;g)<\rho(\alpha)$ and $\ol{\rho}_1(\infty;g)<\rho(\alpha)$.
Therefore $f=R_1\exp(\delta_1)$ and $g=R_2\exp(\delta_2)$, where $\ol{\rho}_1(0;R_i)<\rho(\alpha)$, $\ol{\rho}_1(\infty;R_i)<\rho(\alpha)$ for $i=1,2$, $R_{1}R_{2}$ is a small function of $\alpha$, $\delta_1$ and $\delta_1$ are non-constant polynomials such that $\deg(\delta_1+\delta_2)=\rho(\alpha)$.\par

{\bf Sub-case 2.2.2.} Let $\rho(f)=+\infty$. Since $\rho(f)=\rho(g)$, we have $\rho(f)=+\infty$.  In this case, we have $f=R_1\exp(\delta_1)$ and $g=R_2\exp(\delta_2)$, where $\delta_1$ and $\delta_2$ are transcendental entire functions. Next we consider following two sub-cases.\par

{\bf Sub-case 2.2.2.1.} Let $\rho_2(f)<+\infty$. Then from (\ref{aaa.12}), we have $\rho_2(f)=\rho_2(g)<\rho(f)$. Consequently we have
$\log T(r,f)<r^{\rho_2(f)+\varepsilon}$ and $\log T(r,g)<r^{\rho_2(f)+\varepsilon}$, $\forall$ $r>R$. Let $\rho_2=\max\{\rho_1(\alpha), \rho_1(\frac{1}{\alpha}), \rho_2(f)\}$. Now from (\ref{aaa.9}), we get $\ol N(r,f)=O(r^{\rho_2+\varepsilon})$ and $\ol N(r,g)=O(r^{\rho_2+\varepsilon})$, $\forall$ $r>R$. Consequently
$\ol{\rho}_1(\infty;f)\leq\rho_2<\rho(f)$ and $\ol{\rho}_1(\infty;g)\leq \rho_2<\rho(g)$. Again from (\ref{xa.1}) we get
$\ol{\rho}_1(0;f)\leq\rho_2<\rho(f)$ and $\ol {\rho}_1(0;g)\leq\rho_2<\rho(g)$. Using Theorem 1.A, we get
\[T\left(r,\frac{(f^n)^{(1)}}{f^n}\right)=\ol N(r,f)+\ol N(r,0;f)+O(\log T(r,f)+\log r)=O(r^{\rho_2+\varepsilon})\]
$\forall$ $r>R$ and so
$\rho\left(\frac{(f^n)^{(1)}}{f^n}\right)\leq \rho_2<\rho(f)$. Similarly we can prove that $\rho\left(\frac{(g^n)^{(1)}}{g^n}\right)\leq \rho_2<\rho(f)$.
Consequently from (\ref{alr.20}), we have $\rho(\tilde P)\leq \rho_2<\rho(f)$ and $\rho(\tilde Q)\leq \rho_2<\rho(f)$.

Again since $N(r,0;(f^n)^{(k)}\mid f\neq 0)\leq T(r,\tilde P)$ and $N(r,0;(g^n)^{(k)}\mid g\neq 0)\leq T(r,\tilde Q)$, we have
\[N(r,0;(f^n)^{(k)}\mid f\neq 0)=O(r^{\rho_2+\varepsilon})\;\;\text{and}\;\;N(r,0;(g^n)^{(k)}\mid g\neq 0)=O(r^{\rho_2+\varepsilon}),\]
$\forall\;r>R$. Consequently from (\ref{aaa.10}), we deduce that
\[\rho_1(0;R_1R_2)\leq \rho_2<\rho(f)\;\;\text{and}\;\;\rho_1(\infty;R_1R_2)\leq \rho_2<\rho(f).\]

If we may assume that
\[R_1R_2=z^{k_1+k_2}\frac{P_{1,0}P_{2,0}}{P_{1,\infty}P_{2,\infty}}=z^{k_3}\frac{P_{3,0}}{P_{3,\infty}},\]
where $k_3=k_1+k_2$, $P_{3,0}$ and $P_{3,\infty}$ are the product of primary factors formed with the non-null zeros and poles respectively, then by Ash \cite[Theorem 4.3.6]{1} we get
$\rho(P_{3,0})=\rho_1(0;P_{3,0})\leq \rho_2<\rho(f)$ and $\rho(P_{3,\infty})=\rho_1(0;P_{3,\infty})\leq \rho_2<\rho(f)$. Clearly $\rho(R_1R_2)\leq \rho_2<\rho(f)$. Consequently $\rho\left(\tilde P \tilde Q (R_1R_2)^n\right)\leq \rho_2<\rho(f)$. Since $\rho(\alpha)<+\infty$, it follows from (\ref{al.1.5}) that $\rho\left(e^{\delta_1+\delta_2}\right)\leq \rho_2<\rho(f)$ and so $\delta_1+\delta_2$ is a polynomial. Therefore $f=R_1\exp(\delta_1)$ and $g=R_2\exp(\delta_2)$, where $\ol{\rho}_1(0;R_i)<\rho(\alpha)$, $\ol{\rho}_1(\infty;R_i)<\rho_2(f)$ for $i=1,2$, and $\delta_1$, $\delta_2$ are transcendental entire functions such that $\delta_1+\delta_2$ is a polynomial.\par

{\bf Sub-case 2.2.2.2.} Let $\rho_2(f)=+\infty$. Then from (\ref{aaa.12}), we have $\rho_2(g)=+\infty$. In this case from (\ref{xa.1}), we deduce that
$\ol{\rho}_1(0;R_i)<\rho(\alpha)$ for $i=1,2$. On the other hand from (\ref{aaa.9}) we have $\ol N(r,R_i)=S(r,f)$ for $i=1,2$.
Therefore $f=R_1\exp(\delta_1)$ and $g=R_2\exp(\delta_2)$, where $\ol{\rho}_1(0;R_i)<\rho(\alpha)$, $\ol N(r,R_i)=S(r,f)$ for $i=1,2$, $\delta_1$ and $\delta_2$ are transcendental entire functions.

This completes the proof.

\end{proof}

\end{document}